\documentclass[11pt]{amsart}

\usepackage[pdftex]{graphicx}%
\usepackage[dvipsnames]{xcolor}

\usepackage{a4wide}

\usepackage{amsfonts}
\usepackage{amsmath}
\usepackage{amssymb}
\usepackage{amsthm}
\usepackage{mathtools}
\usepackage{tikz}
\usetikzlibrary{patterns}

\usepackage{paralist}
\usepackage[colorlinks,cite color=blue,pagebackref=true,pdftex]{hyperref}

\usepackage[T1]{fontenc}

\usepackage{caption}
\usepackage{subcaption}

\newcommand\Defn[1]{\textbf{\color{black}#1}}
\newcommand\Def[1]{\Defn{#1}}

\renewcommand\phi{\varphi}
\newcommand\eps{\varepsilon}
\renewcommand\emptyset{\varnothing}
\newcommand\R{\mathbb{R}}

\DeclareFontFamily{U}{mathb}{\hyphenchar\font45}
\DeclareFontShape{U}{mathb}{m}{n}{
      <5> <6> <7> <8> <9> <10> gen * mathb
      <10.95> mathb10 <12> <14.4> <17.28> <20.74> <24.88> mathb12
      }{}
\DeclareSymbolFont{mathb}{U}{mathb}{m}{n}

\DeclareMathSymbol{\precneq}{3}{mathb}{"AC}

\newcommand\Z{\mathbb{Z}}
\newcommand\Rnn{\R_{\ge0}}
\newcommand\inner[1]{\langle {#1} \rangle}
\newcommand\defeq{\coloneqq}

\newcommand\op{^\mathrm{op}}
\DeclareMathOperator{\relint}{relint}

\DeclareMathOperator{\lin}{lin}
\DeclareMathOperator{\lineal}{lineal}

\DeclareMathOperator{\cone}{cone}
\DeclareMathOperator{\vol}{vol}

\newtheorem{thm}{Theorem}[section]%
\newtheorem{cor}[thm]{Corollary}%
\newtheorem{lem}[thm]{Lemma}%
\newtheorem{prop}[thm]{Proposition}

\theoremstyle{definition}

\newtheorem{rem}[thm]{Remark}

\title{Fan Valuations and spherical intrinsic volumes}

\author{Spencer Backman}

\address{Department of Mathematics and Statistics,
University of Vermont, Burlington,
Vermont, USA.}
\email{Spencer.Backman@uvm.edu}

\author{Sebastian Manecke} 
\author{Raman Sanyal}

\address{Institut f\"ur Mathematik, Goethe-Universit\"at Frankfurt, Germany} 
\email{manecke@math.uni-frankfurt.de}
\email{sanyal@math.uni-frankfurt.de}

\keywords{fans, valuations, hyperplane arrangements, spherical
  intrinsic volumes, characteristic polynomials, Whitney numbers, indicator functions}

\subjclass[2020]{ %
  52B11, %
  52B45, %
  52C35, %
  52A39, %
  52A55} %

\date{\today}

\parindent=0pt
\parskip=5pt

\begin{document} 
\begin{abstract}
    We generalize valuations on polyhedral cones to valuations on fans.  For
    fans induced by hyperplane arrangements, we show a correspondence between
    rotation-invariant valuations and deletion-restriction invariants.  In
    particular, we define a characteristic polynomial for fans in terms of
    spherical intrinsic volumes and show that it coincides with the usual
    characteristic polynomial in the case of hyperplane arrangements. This
    gives a simple deletion-restriction proof of a result of Klivans--Swartz.

    The metric projection of a cone is a piecewise-linear map, whose
    underlying fan prompts a generalization of spherical intrinsic volumes to
    indicator functions. We show that these \emph{intrinsic indicators} yield
    valuations that separate polyhedral cones. Applied to hyperplane
    arrangements, this generalizes a result of Kabluchko on projection
    volumes.
\end{abstract}
\maketitle

\newcommand\Faces{\mathcal{F}}%

\newcommand\Fans{\mathbf{Fans}}%
\newcommand\Cones{\mathbf{Cones}}%
\newcommand\Fan{\mathcal{N}}%
\newcommand\Delete{\backslash}%
\newcommand\Restrict{/}%
\newcommand\Arr{\mathcal{A}}%
\newcommand{\Flats}{\mathcal{L}}%
\newcommand{\Bot}{\hat{0}}%
\newcommand{\Top}{\hat{1}}%
\newcommand\ochi{\overline{\chi}}%
\newcommand\Moreau{\mathcal{M}}%

\section{Introduction}\label{sec:intro}%
Let $\mathcal{S}$ be a collection of sets closed under taking intersections. A
map $\phi$ from $\mathcal{S}$ into some abelian group $G$ is a \Def{valuation} if
\[
    \phi(S \cup T) \ = \ \phi(S) + \phi(T) - \phi(S \cap T) \, ,
\]
for any $S,T \in \mathcal{S}$ such that $S \cup T \in \mathcal{S}$. For 
geometric objects such as convex
polytopes, polyhedra, or subspaces, valuations are a gateway between geometry
and combinatorics, amply demonstrated in~\cite{barvinok-intpts,RotaKlain}.
In particular Ehrenborg and Readdy~\cite{EhrenborgReaddy} showed how
generalizations of Zaslavsky's famous formula for the number of regions of a
hyperplane arrangement can be easily inferred using valuations.  The purpose of
this note is to further the ties between geometry and combinatorics
by studying valuations on more general arrangements of geometric objects.

A \Def{fan} in $\R^d$ is a finite collection $\Fan$ of equi-dimensional
polyhedral cones that pairwise meet in faces. We further require that all
cones have the same linear span and thus speak of fans relative to subspaces
of $\R^d$.  Although the collection of fans $\Fans_d$ is a meet-semilattice
with respect to common refinement to which valuations can be
generalized~\cite{gromer,rota-valuation}, there is no natural join operation.
To define valuations on fans, we adapt Sallee's notion of \emph{weak}
valuations~\cite{Sallee} on polytopes: A map $\phi : \Fans_d \to G$ is a
\Def{fan valuation} if for every fan $\Fan \in \Fans_d$ and linear hyperplane
$H$
\[
    \phi(\Fan) \ = \ \phi(\Fan \cap H^{\le})  + \phi(\Fan \cap H^{\ge}) -
    \phi(\Fan \cap H) \, ,
\]
where $H^\le,H^\ge$ denote the two halfspaces induced by $H$. We defer
precise definitions to Section~\ref{sec:fan_vals}. Any arrangement
$\Arr$ of linear hyperplanes in some subspace $U \subseteq \R^d$
induces a fan $\Fan(\Arr)$. In Proposition~\ref{prop:fan_del_contr}, we show
that fan valuations on arrangements satisfy 
\[
        \phi(\Fan(\Arr)) \ = \ \phi(\Fan(\Arr \Delete H)) + \phi(\Fan(\Arr
        \Restrict
        H)) \, .
\]
Such invariants are closely related to deletion-contraction invariants on
(simple, loopless) matroids (\cite[Sect.~3.11]{EC1}, \cite{GZ},
\cite{BrylawskiOxely}). The main difference is that we do not impose special
treatment when $H$ is a coloop of $\Arr$. The benefit is that these
\emph{weak} deletion-restriction invariants have the structure of an abelian
group. More precisely, let $\Flats(\Arr)$ be the lattice of flats of $\Arr$
and write $w_0(\Arr),\dots,w_d(\Arr)$ for the Whitney numbers of the first
kind. Then the group weak deletion-restriction invariants is spanned by the
Whitney numbers.

We show that fan valuations invariant under rotation yield precisely the weak
deletion-restriction invariants.
\begin{thm}\label{thm:fan_val_main}
    Let $\phi : \Fans_d \to G$ be a rotation-invariant fan
    valuation. Then there are $g_0,\dots,g_d \in G$ such that for any
    hyperplane arrangement $\Arr$
    \[
        \phi(\Fan(\Arr)) \ = \ 
        g_0 w_0(\Arr) + 
        \cdots + 
        g_d w_d(\Arr) \, .
    \]
    Conversely, for any deletion-restriction invariant $\psi$ on hyperplane
    arrangements, there is a fan valuation $\phi$ with $\psi(\Arr) =
    \phi(\Fan(\Arr))$.
\end{thm}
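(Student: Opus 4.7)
The plan is to prove the two directions separately. For the forward direction, I would first invoke Proposition~\ref{prop:fan_del_contr}: for any rotation-invariant fan valuation $\phi$, the composite $\psi := \phi \circ \Fan$ satisfies $\psi(\Arr) = \psi(\Arr \Delete H) + \psi(\Arr \Restrict H)$ for every $H \in \Arr$, while rotation invariance ensures that $\psi(\Arr)$ depends only on the underlying matroid of $\Arr$. The structural claim to establish is then that any such matroid-invariant weak deletion-restriction invariant has the form $g_0 w_0 + \cdots + g_d w_d$. That each Whitney number $w_i$ itself satisfies the weak-DR recursion follows from the classical identity $\chi_\Arr = \chi_{\Arr \Delete H} - \chi_{\Arr \Restrict H}$ for the characteristic polynomial, rewritten coefficient-wise (with unsigned Whitney numbers) as $w_i(\Arr) = w_i(\Arr \Delete H) + w_{i-1}(\Arr \Restrict H)$. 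Conversely, strong induction on $|\Arr|$ shows that any weak-DR invariant is determined by its values on the empty arrangements in dimensions $0, 1, \ldots, d$; these values pin down the coefficients $g_i$, and matching both sides on this finite list of base cases then forces agreement everywhere.

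For the converse direction, given $g_0, \ldots, g_d \in G$, I would exhibit the realizing fan valuation as $\phi := g_0 \, v_0 + \cdots + g_d \, v_d$, where $v_i$ denotes the $i$-th spherical intrinsic volume (developed later in the paper as a rotation-invariant fan valuation). The Klivans--Swartz formula, which the paper reproves via the present deletion-restriction framework, identifies $v_i(\Fan(\Arr))$ with $w_i(\Arr)$ up to an absolute normalizing constant, so the prescribed linear combination of Whitney numbers is realized by the corresponding linear combination of spherical intrinsic volumes.

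The main obstacle I foresee is the dimension bookkeeping in the forward direction: since $\Arr \Restrict H$ lives in $H \cong \R^{d-1}$, the inductive base cases are empty arrangements in a spectrum of ambient dimensions, and a priori a rotation-invariant fan valuation could take different values on each trivial fan $\{U\}$. I expect to collapse these values into a single constant $g_0$ by applying the fan-valuation identity to $\{U\}$ against a generic hyperplane and comparing with the deletion-restriction recursion for the one-hyperplane arrangement, yielding consistency relations that force all $\phi(\Fan(\varnothing_k))$ to be governed by the same parameter and thereby making the Whitney expansion well-defined independently of ambient dimension.
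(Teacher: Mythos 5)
Your overall skeleton for the forward direction is right (deletion--restriction from Proposition~\ref{prop:fan_del_contr}, then induction), and your converse direction is exactly the paper's intended argument via Corollary~\ref{cor:klivans_swartz} (with the small correction that $v_i(\Fan(\Arr)) = w_i(\Arr)$ holds on the nose, with no normalizing constant). But the forward direction as you have outlined it has a genuine gap at the base case, and two smaller misstatements.

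The main gap is in your resolution of the ``dimension bookkeeping''. Proposition~\ref{prop:fan_del_contr} is stated only for arrangements that are not singletons, so the deletion--restriction recursion bottoms out at singletons, not at empty arrangements. The paper exploits this directly in Theorem~\ref{thm:whitney_decomp}: put $b_k \defeq \phi(\Fan(\Arr^k))$, set $a_{k-1} \defeq \sum_{i=k}^d (-1)^{k-i} b_i$, check $a_{k-1}+a_k = b_k$ (the singleton case), and induct. Your plan of pushing the base cases to empty arrangements is workable in principle, but your proposed fix is wrong: there are \emph{no} consistency relations forcing the $\phi(\Fan(\varnothing_k))$ to collapse to a single constant $g_0$ or to be governed by one parameter. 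If one extends $\ochi$ to empty arrangements via $\ochi_{\varnothing_k}(t) = t^k$, then $w_i(\varnothing_k) = \delta_{ik}$, so $\phi(\Fan(\varnothing_k))$ is precisely the coefficient $g_k$; these are $d+1$ independent free parameters. Applying the valuation identity to $\{U\}$ and comparing with the one-hyperplane case gives only the tautology $b_k = \phi(\varnothing_k) + \phi(\varnothing_{k-1})$, the same equation you would also get from the recursion---no new constraint, no collapse. (The decomposition is also non-unique: by Zaslavsky's relation $\sum_i (-1)^i w_i(\Arr) = 0$ for nonempty $\Arr$, the $g_i$ are only determined modulo the alternating vector, which is exactly why the paper is free to set $a_d = 0$.)

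Two smaller issues. First, in the paper's indexing $w_i$ is the coefficient of $t^i$ in $\ochi_\Arr(t)$, so from~\eqref{eqn:ochi} the coefficientwise recursion is $w_i(\Arr) = w_i(\Arr \Delete H) + w_i(\Arr \Restrict H)$ with \emph{no} shift; your shifted formula $w_i = w_i + w_{i-1}$ is the corank-indexed version and is inconsistent with the conventions here. Second, rotation invariance does not directly ``ensure that $\psi(\Arr)$ depends only on the matroid''---two arrangements with isomorphic matroids need not differ by a rotation. What rotation invariance buys is that all $k$-singletons receive a common value; matroid invariance of $\phi(\Fan(\Arr))$ then falls out of the deletion--restriction induction, not the other way around.
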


For the latter part, we consider fan valuations induced by \Def{spherical
intrinsic volumes}, also known as projection volumes. The $k$-th spherical
intrinsic volume $v_k(C)$ of a cone $C \subset \R^d$ is the probability that
the point of $C$ closest to $x \in B_{d}$ is contained in the relative
interior of a face $F_x \subseteq C$ of dimension $k$. The definition is
extended to fans by setting $v_k(\Fan) \defeq \sum_{C \in \Fan} v_k(C)$. With
that, we define the (unsigned) characteristic polynomial of a fan by
\[
    \ochi_{\Fan}(t) \ \defeq \ v_0(\Fan) + v_1(\Fan) t +  \cdots +
    v_d(\Fan) t^d  \, .
\]
We argue that $\ochi_{\Fan}(t)$ is a suitable generalization of the
characteristic polynomial of a hyperplane arrangement. We show that it
satisfies Zaslavsky's fundamental results: 
\[
    \ochi_{\Fan}(1) \ = \ \# \text{ regions of $\Fan$} 
    \qquad \text{ and } \qquad 
    \ochi_{\Fan}(-1) \ = \ 0  \, .
\]
We derive the latter from an identity of
Hug--Kabluchko~\cite{hug2016inclusion}, for which we provide a self-contained
proof (Theorem~\ref{thm:F-N_FC}). 

Most importantly, we show $\ochi_{\Fan(\Arr}(t) = (-1)^{\dim
\Arr}\chi_{\Arr}(-t)$, where $\chi_{\Arr}(t)$ is the usual characteristic
polynomial of an arrangement. This gives a simple deletion-restriction proof
of the main result of Klivans--Swartz~\cite{KlivansSwartz} that identifies
projection volumes with the Whitney numbers of $\Flats(\Arr)$.

Whereas Hadwiger's famous classification theorem~\cite{Hadwiger}
states that the linear space of continuous and rigid-motion valuations
on convex bodies is spanned by the usual intrinsic volumes, there is
no such result for spherical convex sets. McMullen~\cite[Problem
49]{ProbInConv} conjectured that the linear space
of continuous and rotation-invariant valuations on spherical convex
bodies is spanned by the spherical intrinsic volumes. From this
perspective, Theorem~\ref{thm:fan_val_main} together with
the result of Klivans--Swartz can be seen as an indication for this
conjecture.

In the second part of the paper, we take a more refined look at spherical
intrinsic volumes. The collection of points $x \in \R^d$ such that the nearest
point in $C$ is contained in a fixed face $F \subseteq C$ is a polyhedral cone
$\Pi_F(C)$ and $\Moreau(C) \defeq \{ \Pi_F(C) : F \subseteq C \text{ face}\}$
is a complete fan, which we call the \Def{Moreau fan} of $C$. The face lattice of
$\Moreau(C)$ is given by the interval poset of the face lattice of $C$. Such
fan structures were considered by Bj\"orner under the name of
\emph{anti-prisms} in connection with a question of Lindstr\"om and our
findings reconfirm results announced in~\cite{bjorner}.

It is known that $C \mapsto v_k(C)$ is a cone valuation~\cite{McM-angle}.
We prove a generalization that this holds on the level of simple indicator
functions: Consider the set $\Pi_k(C) = \bigcup_F \Pi_F(C)$, where the union
is over all $k$-dimensional faces of $C$. Its \emph{simple} indicator is the
function $V_k(C) : \R^d \to \Z$ that disagrees with the indicator of
$\Pi_k(C)$ only on a set of measure zero. We show that $C \mapsto V_k(C)$ is a
valuation (Theorem~\ref{thm:Vk_val}) and that $C$ can be recovered from $V_k(C)$ for
all $\dim \lineal(C) \le k \le \dim C$ with $2k \neq d$ (Theorem~\ref{thm:oV-injec}).

For the function $V_k(\Fan(\Arr)) = \sum_{C \in \Fan(\Arr)}V_k(C)$ of a
hyperplane arrangement $\Arr$ it follows that $V_k(\Fan(\Arr))(x) = w_k(\Arr)$
for all generic $x \in \R^d$.  Kabluchko~\cite{Kabluchko2020}
showed that the exceptional set $\{x : V_k(\Fan(\Arr))(x) \neq w_k(\Arr) \}$
coincides with the support of a hyperplane arrangement. We generalize
Kabluchko's result in Theorem~\ref{thm:Vk_arr} with a short proof that also
allows us to give a simple interpretation for the associated arrangement. 

\textbf{Acknowledgements.} Work on this project started at the Mathematical
Sciences Research Institute in Berkeley, California, during the Fall 2017
semester on \emph{Geometric and Topological Combinatorics}. We acknowledge
support by National Science Foundation under Grant
No.~DMS-1440140 during our stay at the MSRI as well as from the
DFG-Collabora\-tive Research Center, TRR 109 ``Discretization in Geometry and
Dynamics''.

\section{Fan valuations and deletion-restriction invariants}\label{sec:fan_vals}

Throughout, we assume that polyhedral cones are nonempty and contain the
origin. For a polyhedral cone $C \subseteq \R^d$, the \Def{linear hull}
$\lin(C)$ is the inclusion-minimal linear subspace containing $C$ and the
\Def{lineality space} is the inclusion-maximal linear subspace contained in $C$.
A \Def{fan} in $\R^d$ is a finite collection $\Fan$ of polyhedral cones such
that for all $C, C' \in \Fan$
\begin{compactenum}[\rm (i)]
    \item $C \cap C'$ is a face of $C$ and
    \item $\lin(C) = \lin(C')$.
\end{compactenum}
It follows that all cones are of the same dimension. We set $\dim \Fan \defeq
\dim C$ and $\lin(\Fan) \defeq  \lin(C)$ as well as $\lineal(\Fan) \defeq
\lineal(C)$ for any $C \in \Fan$. The \Def{rank} of $\Fan$ is $r(\Fan) =
\dim(\Fan) - \dim \lineal(\Fan)$.  Denote by $\Fans_d$ the collection of fans
in $\R^d$. We define for any set $S \subseteq \lin(\Fan)$
\[
    \Fan \cap S \defeq \ \{ C \cap S : C \in \Fan,\; \relint(C) \cap S \neq
    \emptyset \} \, .
\]

A map $\phi$ from $\Fans_d$ into some abelian group is a \Def{fan valuation}
if $\phi(\emptyset) = 0$ and for any $\Fan \in \Fans_d$ and hyperplane $H
\subseteq \lin(\Fan)$ 
\[
    \phi(\Fan) \ = \ \phi(\Fan \cap H^{\le})  + \phi(\Fan \cap H^{\ge}) -
    \phi(\Fan \cap H) \, ,
\]
where $H^\le, H^\ge$ denote the two closed halfspaces induced by $H$.

Let $\Cones_d$ be the intersectional family of polyhedral cones in $\R^d$.
Every cone $C \in \Cones_d$ gives rise to a fan $\{C\} \in \Fans_d$ and a fan
valuation gives rise to map $\phi'$ on $\Cones_d$ satisfying
\[
    \phi'(C)  \ = \ \phi'(C \cap H^\le) +
    \phi'(C \cap H^\ge) - \phi'(C \cap H) \, .
\]
Such a map is called a \Def{weak valuation}~\cite{Sallee}. Every valuation on
cones is naturally a weak valuations and Sallee's~\cite{Sallee} arguments imply
that every weak valuation on polyhedral cones is a valuation. In particular,
if $\phi'$ is a cone valuation, then
\begin{equation}\label{eqn:induced_val}
    \phi(\Fan) \ \defeq \ \sum_{C \in \Fan} \phi'(C)
\end{equation}
is a fan valuation. The next result shows that every valuation is of that
form.
\begin{prop}\label{prop:cone_val}
    Let $\phi$ be a fan valuation. Then 
    \[
        \phi(\Fan) \ = \ \sum_{C \in \Fan} \phi(\{C\}) \, .
    \]
\end{prop}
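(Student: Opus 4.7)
The plan is to induct on $|\Fan|$. The base cases $|\Fan|\le 1$ are immediate from $\phi(\emptyset)=0$. For $|\Fan|\ge 2$, the strategy is to find a hyperplane $H\subseteq\lin(\Fan)$ that cuts $\Fan$ into three fans of strictly smaller cardinality, apply the fan valuation axiom once to $\phi(\Fan)$ and once to each singleton $\phi(\{C\})$, and match the resulting expansions term by term via the inductive hypothesis.

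To produce $H$, I would pick any two distinct $C_1,C_2\in\Fan$. Their intersection is a proper face of each by the fan axioms, so $\relint C_1$ and $\relint C_2$ are disjoint nonempty relatively open convex subsets of $\lin(\Fan)$. The standard separation theorem then yields a hyperplane $H\subseteq\lin(\Fan)$ with $C_1,C_2$ in opposite closed halfspaces, and since each $\relint C_i$ is open in $\lin(\Fan)$ this upgrades to $\relint C_1\subseteq H^{<}$ and $\relint C_2\subseteq H^{>}$. Partitioning $\Fan=\Fan^{<}\sqcup\Fan^{=}\sqcup\Fan^{>}$ according to whether $\relint C$ lies in $H^{<}$, meets $H$, or lies in $H^{>}$, the fact that both $\Fan^{<}$ and $\Fan^{>}$ are nonempty gives $|\Fan\cap H^{\le}|$, $|\Fan\cap H^{\ge}|$, and $|\Fan\cap H|$ each strictly smaller than $|\Fan|$.

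Applying the fan valuation axiom to $\Fan$, and separately to each singleton $\{C\}$ (valid because $\lin(\{C\})=\lin(\Fan)\supseteq H$) and summing over $C\in\Fan$, gives
\[
\phi(\Fan)\ =\ \phi(\Fan\cap H^{\le})+\phi(\Fan\cap H^{\ge})-\phi(\Fan\cap H)
\]
and
\[
\sum_{C\in\Fan}\phi(\{C\})\ =\ \sum_{C'\in\Fan\cap H^{\le}}\phi(\{C'\})+\sum_{C'\in\Fan\cap H^{\ge}}\phi(\{C'\})-\sum_{C'\in\Fan\cap H}\phi(\{C'\}),
\]
where the reindexing in the second identity uses that $C\mapsto C\cap H^{\le}$ is a bijection from $\{C\in\Fan:\relint C\cap H^{\le}\ne\emptyset\}$ onto $\Fan\cap H^{\le}$, with injectivity coming from the disjointness of relative interiors of distinct cones in a fan, and similarly for $H^{\ge}$ and $H$. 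The inductive hypothesis rewrites each sum on the right as $\phi$ of the corresponding fan, so the two expansions agree and the proposition follows.

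The main obstacle is the separation step: guaranteeing that for any fan with at least two cones there is always a hyperplane strictly separating some pair of them, so that the induction actually reduces the cone count on all three branches. Once $H$ is in hand, the rest is bookkeeping driven by the fact that the valuation identity has the same formal shape when applied to $\Fan$ and when applied to each singleton and summed.
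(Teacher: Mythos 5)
Your proof is correct and follows the same core idea as the paper: pick two cones, separate them by a linear hyperplane $H$ of $\lin(\Fan)$, apply the valuation axiom, and induct. The one genuine difference is your induction scheme: you induct on $|\Fan|$ alone, relying on the observation that with $\relint C_1 \subseteq H^{<}$ and $\relint C_2 \subseteq H^{>}$ all three of $\Fan\cap H^{\le}$, $\Fan\cap H^{\ge}$, $\Fan\cap H$ have strictly smaller cardinality (the last has at most $|\Fan|-2$ cones since neither $C_1$ nor $C_2$ contributes). The paper instead runs a double induction on cone count and dimension, exploiting $\dim(\Fan\cap H)<\dim\Fan$ rather than bounding $|\Fan\cap H|$, and therefore has to dispose of the one-dimensional case $\Fan=\{\Rnn c,-\Rnn c\}$ separately. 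Your version absorbs that case into the general argument ($\Fan\cap H=\emptyset$ there, and $\phi(\emptyset)=0$), which is a minor but clean simplification. One point worth making explicit if you write this up: the separating hyperplane must be linear, which holds here because $\relint C_1$ and $\relint C_2$ are convex cones, so the affine separating hyperplane from the standard theorem can be taken through the origin.
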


Thus, we will write $\phi(C)$ instead of $\phi(\{C\})$ from now on.

\begin{proof}
    The claim follows trivially if $\Fan$ is empty or if it consists
    of a single cone. If $\dim \Fan = 1$, then the only nontrivial
    case is $\Fan = \{\Rnn c, -\Rnn c\}$ for some
    $c \in \R^d \setminus\{0\}$. Now $H = \{0\}$ is the unique linear
    hyperplane in $\lin(\Fan)$ and $\Fan \cap H = \emptyset$. The
    assertion follows from the definition of fan valuations.

    Assume now that the statement holds for all fans consisting of
    less than $k$ cones and whose dimension is smaller than $e$ for some
    $k \geq 2$ and $e \ge 2$. Let $\Fan$ be a fan with $k$ cones and
    $\dim \Fan = e$. For $C,C' \in \Fan$ let $H$ be a separating
    hyperplane. Then $\Fan \cap H^\ge$ and $\Fan \cap H^\le$ have less
    than $k$ cones and $\dim(\Fan \cap H) < e$.  Let
    $X = \{C \in \Fan : \relint C \cap H \neq \emptyset\}$. Then
    \begin{align*}
      \phi(\Fan) \ &= \ \phi(\Fan \cap H^\leq) + \phi(\Fan \cap H^\geq) - \phi(\Fan \cap H)\\
      \ &= \ \sum_{C \in \Fan \setminus X} \phi(\{C\}) + \sum_{C \in X} (\phi(\{C \cap H^\leq\}) + \phi(\{C \cap H^\geq\}) - \phi(\{C \cap H\}))\\
      \ &= \ \sum_{C \in \Fan \setminus X} \phi(\{C\}) + \sum_{C \in X} \phi(\{C\}) \ = \ \sum_{C \in \Fan} \phi(\{C\})\,.\qedhere
    \end{align*}
\end{proof}

\subsection{Hyperplane arrangements}
Let $\Arr$ be a finite collection of linear hyperplanes in a subspace $L$ of
$\R^d$. The complement $L \setminus \bigcup \Arr$ is a collection of open
cones whose closures define a fan that we denote by $\Fan(\Arr)$. For a
hyperplane $H \not\in \Arr$ observe that $\Fan(\Arr) \cap H = \Fan(\Arr
\Restrict H)$, where $\Arr \Restrict H \defeq \{ H' \cap H : H' \in \Arr, H'
\neq H \}$ is the \Def{restriction} of $\Arr$ to $H$. If $H \in \Arr$, we
write $\Arr \Delete H = \{H' \in \Arr : H' \neq H\}$ for the \Def{deletion} of
$H$. A \Defn{$k$-singleton}, or simply, a singleton, is a hyperplane
arrangement consisting of a single hyperplane in some $k$-dimensional subspace
of $\R^d$, where $1 \leq k \leq d$.

\begin{prop}\label{prop:fan_del_contr}
    Let $\phi$ be a fan valuation and $\Arr$ a hyperplane arrangement
    which is not a singleton. For any $H \in \Arr$
    \[
        \phi(\Fan(\Arr)) \ = \ \phi(\Fan(\Arr \Delete H)) + \phi(\Fan(\Arr
        \Restrict
        H)) \, .
    \]
\end{prop}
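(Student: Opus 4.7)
The plan is to apply the defining axiom for fan valuations twice with the same cutting hyperplane $H$: once to $\Fan(\Arr)$, and once to $\Fan(\Arr \Delete H)$. Comparing the two resulting identities will yield the desired deletion-restriction formula.

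For $\Fan(\Arr)$, I first observe that $H \in \Arr$ forces $\Fan(\Arr) \cap H = \emptyset$. Indeed, the cones of $\Fan(\Arr)$ are the chambers of $\Arr$, which are full-dimensional in $\lin(\Fan(\Arr))$ and lie strictly on one side of $H$ in their relative interior; hence no chamber $C$ satisfies $\relint(C) \cap H \neq \emptyset$, and the axiom reduces to
\[
    \phi(\Fan(\Arr)) \ = \ \phi(\Fan(\Arr) \cap H^{\le}) + \phi(\Fan(\Arr) \cap H^{\ge}).
\]
For $\Fan(\Arr \Delete H)$, we have $H \notin \Arr \Delete H$, so the observation preceding the proposition together with the identity $(\Arr \Delete H) \Restrict H = \Arr \Restrict H$ gives $\Fan(\Arr \Delete H) \cap H = \Fan(\Arr \Restrict H)$, and the axiom reads
\[
    \phi(\Fan(\Arr \Delete H)) \ = \ \phi(\Fan(\Arr \Delete H) \cap H^{\le}) + \phi(\Fan(\Arr \Delete H) \cap H^{\ge}) - \phi(\Fan(\Arr \Restrict H)).
\]

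The main combinatorial step is to verify $\Fan(\Arr) \cap H^{\le} = \Fan(\Arr \Delete H) \cap H^{\le}$ (and symmetrically on $H^{\ge}$). Chambers of $\Arr \Delete H$ come in two types: those that do not cross $H$, which are already chambers of $\Arr$, and those bisected by $H$, each of whose lower half is a chamber of $\Arr$ contained in $H^{\le}$. Selecting cones whose relative interior meets $H^{\le}$ and intersecting with $H^{\le}$ therefore produces precisely the chambers of $\Arr$ lying in $H^{\le}$, i.e.\ $\Fan(\Arr) \cap H^{\le}$. Plugging this identification into the second displayed equation and comparing with the first yields the claim. The hypothesis that $\Arr$ is not a singleton is invoked only to guarantee that $\Arr \Delete H$ and $\Arr \Restrict H$ are arrangements in subspaces of the expected dimension, so that the fans above are well-defined. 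I expect the halfspace identification to require the most care, since the operation $\Fan \cap H^{\le}$ filters cones by relative interiors, and one must check that no spurious lower-dimensional cones sneak in on either side.
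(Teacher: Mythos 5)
Your proof is correct, and the combinatorial core---the identity $\Fan(\Arr) \cap H^{\le} = \Fan(\Arr \Delete H) \cap H^{\le}$ and its mirror on the $H^{\ge}$ side---is the same fact the paper exploits. The route differs in one respect: the paper applies the fan-valuation axiom only \emph{once}, to $\Fan(\Arr \Delete H)$, and then invokes Proposition~\ref{prop:cone_val} (additivity of $\phi$ over the cones of a fan) to rewrite $\phi(\Fan(\Arr))$ as the sum of the two halfspace terms. You instead apply the axiom a \emph{second} time, to $\Fan(\Arr)$ itself, using $\Fan(\Arr)\cap H = \emptyset$ together with $\phi(\emptyset)=0$. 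This makes your argument self-contained and independent of Proposition~\ref{prop:cone_val}, a small but genuine simplification of the logical dependencies; the cost is the extra (easy) observation that, because $H \in \Arr$, no chamber of $\Arr$ has relative interior meeting $H$.

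One small inaccuracy in your closing remark: the non-singleton hypothesis is not needed to make $\Arr \Delete H$ or $\Arr \Restrict H$ well-defined. When $\Arr = \{H\}$ is a $k$-singleton in a subspace $L$, both deletions and restrictions are empty arrangements, in $L$ and $H$ respectively, giving $\Fan(\Arr \Delete H) = \{L\}$ and $\Fan(\Arr\Restrict H) = \{H\}$, and the deletion-restriction identity still holds---it reduces exactly to the valuation axiom applied to the trivial fan $\{L\}$ with cutting hyperplane $H$. The hypothesis is present because singletons serve as base cases in the recursions built on this proposition (cf.\ Theorem~\ref{thm:whitney_decomp} and the definition of $\ochi$), not because the proposition itself would fail for them.
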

\begin{proof}
    Let $H \in \Arr$ and set $\Arr' = \Arr \Delete H$.
    The valuation property yields
    \[
        \phi(\Fan(\Arr')) \ = \ \phi(\Fan(\Arr') \cap H^\le) +
        \phi(\Fan(\Arr') \cap H^\ge) -
        \phi(\Fan(\Arr') \cap H)
    \]
    We infer from Proposition~\ref{prop:cone_val} that $\phi(\Fan(\Arr)) =
    \phi(\Fan(\Arr') \cap H^\le) + \phi(\Fan(\Arr') \cap H^\ge)$. By
    definition $\phi(\Fan(\Arr' \cap H)) = \phi(\Fan(\Arr \Restrict H))$,
    which now yields the claim.
\end{proof}

\newcommand\SO{\mathrm{SO}}%
The group $\SO(\R^d)$ of rotations acts on $\Fans_d$ by $g \cdot \Fan \defeq
\{gC : C \in \Fan\}$. We call a fan valuation $\phi$ \Def{invariant} if
$\phi(g \cdot \Fan) = \phi(\Fan)$ for all $g \in \SO(\R^d)$ and $\Fan \in
\Fans_d$. Clearly, if $\phi$ is invariant, it assigns the same value to all
$k$-singletons.

We define the \Def{unsigned characteristic polynomial} $\ochi_\Arr(t)$ of an
arrangement $\Arr$ recursively as follows. If $\Arr$ is a $k$-singleton, then
$\ochi_{\Arr}(t) \defeq t^k + t^{k-1}$. If $\Arr$ consists of more than one
hyperplane, then for $H \in \Arr$
\begin{equation}\label{eqn:ochi}
    \ochi_{\Arr}(t) \ \defeq \ \ochi_{\Arr \Delete H}(t) + \ochi_{\Arr \Restrict H}(t)
    \, .
\end{equation}
The unsigned characteristic polynomial is related to the usual characteristic
polynomial (cf.~\cite[Sect.~3.11]{EC1}) by $\ochi_\Arr(t) = (-1)^{\dim \Arr}
\chi_\Arr(-t)$. The coefficients of $\ochi_\Arr(t)$ are the (unsigned)
\emph{Whitney numbers of the first kind} denoted by $w_i(\Arr)$. 

\begin{thm}\label{thm:whitney_decomp}
    Let $\phi$ be an invariant fan valuation taking values in an
    abelian group $G$. Then there are $a_0,\dots,a_d \in G$, such that
    for every arrangement $\Arr$
    \begin{equation}\label{eq:whitney_decomp}
        \phi(\Fan(\Arr)) \ = \ a_0 w_0(\Arr) + \cdots + a_{d-1} w_{d-1}(\Arr) \, .
    \end{equation}
    Moreover, the $a_i$'s are determined by the values $\phi(\Arr^k)$, where
    $\Arr^k$ is a $k$-singleton, $1 \leq k \leq d$.
\end{thm}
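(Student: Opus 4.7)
The plan is to argue by strong induction on the number of hyperplanes in $\Arr$, matching the deletion-restriction identity from Proposition~\ref{prop:fan_del_contr} against the recursive definition~\eqref{eqn:ochi} of $\ochi_\Arr(t)$.

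First I would handle the base case $|\Arr| = 1$. By rotation-invariance, the value $b_k \defeq \phi(\Fan(\Arr^k))$ depends only on $k$, not on which $k$-singleton is chosen. The identity $\ochi_{\Arr^k}(t) = t^k + t^{k-1}$ tells us that $\Arr^k$ has only two nonzero Whitney numbers, namely in degrees $k-1$ and $k$ (coefficients of $\ochi_{\Arr^k}$). The asserted formula therefore forces the $d$ linear relations $b_k = a_{k-1} + a_k$ for $k = 1, \dots, d$. Read in decreasing order of $k$, with the natural boundary convention at the top index, this is a triangular system that recursively and uniquely determines $a_{d-1}, a_{d-2}, \dots, a_0$ from the singleton values, proving the ``moreover'' part of the theorem and simultaneously establishing the formula for all singletons.

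Next I would carry out the inductive step. Assume $|\Arr| \ge 2$ and the formula holds for all arrangements with strictly fewer hyperplanes. Pick any $H \in \Arr$. Both $\Arr \Delete H$ and $\Arr \Restrict H$ contain at most $|\Arr|-1$ hyperplanes, so the inductive hypothesis applies to each. Proposition~\ref{prop:fan_del_contr} gives
\[
  \phi(\Fan(\Arr)) \ = \ \phi(\Fan(\Arr \Delete H)) + \phi(\Fan(\Arr \Restrict H)),
\]
while the recursion~\eqref{eqn:ochi} for $\ochi_\Arr$, read coefficient by coefficient, yields $w_i(\Arr) = w_i(\Arr \Delete H) + w_i(\Arr \Restrict H)$ for every $i$. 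Adding the two inductive expressions and collecting by index $i$ produces $\phi(\Fan(\Arr)) = \sum_i a_i w_i(\Arr)$, as required.

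The main obstacle is the bookkeeping in the base case: one must fix the correct indexing convention for the Whitney numbers and verify that the system of singleton equations is solvable (and not over-determined) for the declared number of coefficients $a_i$. Once this is nailed down, the inductive step is essentially a translation of Proposition~\ref{prop:fan_del_contr} into the language of Whitney numbers via~\eqref{eqn:ochi}. A secondary but routine point is handling low-complexity edge cases, such as when $\Arr \Restrict H$ has at most one hyperplane or reduces to a trivial arrangement, so that the induction reaches the singleton base case cleanly.
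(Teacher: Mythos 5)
Your proof is correct and follows essentially the same route as the paper: induction via Proposition~\ref{prop:fan_del_contr} together with the deletion-restriction recursion~\eqref{eqn:ochi}, after first solving the triangular system $b_k = a_{k-1} + a_k$ (with the top-index boundary convention, i.e.\ $a_d = 0$) coming from the $k$-singletons. The paper merely makes the solution of that system explicit, defining $a_{k-1} = \sum_{i=k}^d (-1)^{k-i}\,b_i$ and checking $a_{k-1}+a_k = b_k$, which is exactly what your back-substitution yields.
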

\begin{proof}
    Let $b_k \defeq \phi(\Fan(\Arr^k))$ for any $k$-singleton $\Arr^k$
    and set $a_{k-1} \defeq \sum_{i = k}^d (-1)^{k-i} b_i$ for
    ${1 \leq k \leq d}$. We proceed by induction. Since
    $a_{k-1} + a_k = b_k$, clearly \eqref{eq:whitney_decomp} holds
    for $k$-singletons. Otherwise, let $H \in \Arr$. Then
    \begin{align*}
      \phi(\Fan(\Arr))
      \ &= \ \phi(\Fan(\Arr \Delete H)) + \phi(\Fan(\Arr \Restrict H))\\
      \ &= \ \sum_{i = 0}^{d-1} a_i w_i(\Arr \Delete H)  + \sum_{i = 0}^{d-1}
        a_i w_i(\Arr \Restrict H)
      \ = \ \sum_{i = 0}^{d-1} a_i w_i(\Arr)\qedhere
    \end{align*}
\end{proof}

\newcommand{\sphvol}{\vartheta}%
\newcommand\MD{\mathcal{M}}%

\subsection{Spherical intrinsic volumes}
Every weak deletion-restriction invariant arises from an invariant fan
valuation, more precisely, from a combination of spherical intrinsic volumes.
To show this, we introduce a characteristic polynomial for fans.

Given a polyhedral cone $C \subseteq \R^d$ and a point $x \in \R^d$, there is
a unique point $\pi_C(x) \in C$ minimizing the Euclidean distance $\|x -
\pi_C(x)\|_2$. The map $\pi_C : \R^d \to C$ is called the \Def{metric
projection} or \Def{nearest-point map} of $C$;
cf.~\cite[Sect.~1.2]{Schneider}. Let us denote by $F_x $ the unique face of
$C$ that contains $\pi_C(x)$ in its relative interior and $\Pi_k(C) \defeq \{ x \in
\R^d : \dim F_x = k \}$.  The \Def{$k$-th spherical intrinsic volume} is given
by
\[
    v_k(C) \ \defeq \ \frac{\vol(\Pi_k(C) \cap B_d)}{\vol(B_d)} \, .
\]
In the next section we will consider the sets $\Pi_k(C)$ more closely and
in particular deduce the known fact that $C \mapsto v_k(C)$ is a
cone valuation (Corollary~\ref{cor:vk_val}). It is apparent that
$v_k$ is $\SO(\R^d)$-invariant. We write $v_k(\Fan)$ for the induced $k$-th
spherical intrinsic volume of a fan $\Fan$ and we define its
its \Def{characteristic polynomial}
\[
    \ochi_{\Fan}(t) \ \defeq \ v_0(\Fan) + v_1(\Fan) t +  \cdots +
    v_d(\Fan) t^d  \, .
\]
The characteristic polynomial shares a number of similarities with
that of a hyperplane arrangement. Since the $\Pi_k(C)$ cover $\R^d$ and
$\Pi_k(C) \cap \Pi_l(C)$ has measure zero, it follows that
\[
    \ochi_{\Fan}(1) \ = \ |\Fan| \, 
\]
and it follows from Theorem~\ref{thm:F-N_FC} in the next
section that
\[
    \ochi_{\Fan}(-1) \ = \ 0 \, ,
\]
which is a counterpart to Zaslavsky's famous results concerning
characteristic polynomials of hyperplane arrangements. %

For the case $\Fan = \Fan(\Arr)$, the spherical intrinsic volumes $v_k(\Fan) =
\sum_{C \in \Fan} v_k(C)$ were studied by Klivans
and Swartz and the following is the main result of~\cite{KlivansSwartz}.

\begin{cor}\label{cor:klivans_swartz}
    Let $\Arr$ be a linear hyperplane arrangement. Then
    \[
        \ochi_{\Fan(\Arr)}(t) \ = \ \ochi_{\Arr}(t) \, .
    \]
\end{cor}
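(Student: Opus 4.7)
The plan is to show that the polynomial-valued functions $\Arr \mapsto \ochi_{\Fan(\Arr)}(t)$ and $\Arr \mapsto \ochi_\Arr(t)$ satisfy the same deletion-restriction recursion with matching values on singletons, and then conclude by induction on $|\Arr|$.

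First I would verify that $\Fan \mapsto \ochi_\Fan(t)$ is an invariant $\R[t]$-valued fan valuation. By the forthcoming Corollary~\ref{cor:vk_val}, each $v_k$ is a cone valuation, and Proposition~\ref{prop:cone_val} together with~\eqref{eqn:induced_val} promotes it to a fan valuation; the polynomial combination $\sum_k v_k(\Fan)\, t^k$ inherits both the valuation and rotation-invariance properties coefficient-wise. Applying Proposition~\ref{prop:fan_del_contr} then yields
\[
    \ochi_{\Fan(\Arr)}(t) \ = \ \ochi_{\Fan(\Arr \Delete H)}(t) + \ochi_{\Fan(\Arr \Restrict H)}(t)
\]
for any non-singleton $\Arr$ and any $H \in \Arr$, which is exactly the defining recursion~\eqref{eqn:ochi} of $\ochi_\Arr(t)$.

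The main substantive step is the singleton base case. For a $k$-singleton $\Arr^k = \{H\}$ in a $k$-dimensional subspace $L \subseteq \R^d$, the fan $\Fan(\Arr^k) = \{C_+, C_-\}$ consists of the two closed half-spaces of $L$ bounded by $H$; each $C_\pm$ has lineality space $H$, so its face lattice is the two-element chain $\{C_\pm, H\}$. Decomposing $x = x_L + x_{L^\perp}$, one has $\pi_{C_\pm}(x) = \pi_{C_\pm}(x_L)$, which lies in $\relint(C_\pm)$ exactly when $x_L \in C_\pm$ and otherwise lies in $H$. Reflecting $B_d$ through the hyperplane $H \oplus L^\perp \subset \R^d$ swaps $C_+$ and $C_-$ while preserving Lebesgue measure, so $v_k(C_\pm) = v_{k-1}(C_\pm) = 1/2$ and all other $v_i(C_\pm) = 0$. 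Summing over the two regions gives $\ochi_{\Fan(\Arr^k)}(t) = t^{k-1} + t^k = \ochi_{\Arr^k}(t)$, and induction on $|\Arr|$ closes the argument. The principal hurdle is this geometric computation—in particular pinning down the face of $C_\pm$ containing $\pi_{C_\pm}(x)$ and using the reflection symmetry correctly—after which everything reduces to the recursion machinery already in place.
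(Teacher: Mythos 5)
Your proof follows the same strategy as the paper: invoke Proposition~\ref{prop:fan_del_contr} to match the deletion-restriction recursion and then verify the base case on $k$-singletons, where $v_k(H^\le)=v_{k-1}(H^\le)=\tfrac12$ and all other $v_j$ vanish. You supply the geometric details of the base case that the paper leaves implicit (the paper simply states the values), and these details are correct, with the small remark that the relevant consequence of the reflection through $H+L^\perp$ is that it interchanges $\Pi_k(C_+)$ and $\Pi_{k-1}(C_+)$ up to a null set, which is what directly forces $v_k(C_+)=v_{k-1}(C_+)=\tfrac12$.
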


\begin{proof}
    In light of Proposition~\ref{prop:fan_del_contr}
    and~\eqref{eqn:ochi}, it suffices to compute
    $\ochi_{\Fan(\Arr^k)}(t)$, where $\Arr^k$ is a $k$-singleton
    in some linear subspace $L$. More specifically, if
    $H^\le \subset L$ is a halfspace, then
    $v_{k}(H^\le) = v_{k-1}(H^\le) = \frac{1}{2}$ and
    $v_{j}(H^\le) = 0$ for all other $j$.  This shows
    $\ochi_{\Fan(\Arr^k)}(t) = t^k + t^{k-1}$. Hence
    $\ochi_{\Fan(\Arr)}(t)$ and $\ochi_{\Arr}(t)$ satisfy the same
    deletion-restriction recurrence with identical starting
    conditions.
\end{proof}

\section{Anti-prism fans and intrinsic volumes}\label{sec:indicator}

In this section, we take a closer look at the geometric combinatorics of
spherical intrinsic volumes by way of associated indicator functions.

\subsection{Moreau fans and anti-prisms}
Let $C \subseteq \R^d$ be a polyhedral cone. The \Def{metric projection}
$\pi_C : \R^d \to C$ associates to any point $x \in \R^d$ the unique point
$\pi_C(x) \in C$ with $\|x - \pi_C(x)\|$ minimal.  It is straightforward to
verify that if $x,y \in \R^d$ such that $\pi_C(x),\pi_C(y) \in F$ for some
face $F \subseteq C$, then $\pi_C(x+y) \in F$. Hence $\Pi_F(C) \defeq
\pi^{-1}_C(F)$ is a closed, full-dimensional polyhedral cone.
Moreau~\cite{Moreau} considered the decomposition of space into the collection
of cones
\[
    \Moreau(C) \ \defeq \ \{ \Pi_F(C) : \text{$F$ is a nonempty face of $C$} \} 
\]
and we call $\Moreau(C)$ the \Def{Moreau fan} of $C$.

\newcommand\poset{\mathcal{L}}%
The combinatorics of Moreau fans can be nicely described in terms of
Lindstr\"om's \emph{interval posets}~\cite{lindstroem}. Let $(\poset,\preceq)$
be a partially ordered set. The \Def{interval poset} $I(\poset)$ is the
collection of nonempty intervals $[a,c] = \{ b : a \preceq b \preceq c \}$
ordered by \emph{reverse} inclusion. The maximal elements are precisely
$[a,a]$ and if $\poset$ has a top and bottom element $\hat{1}$ and $\hat{0}$,
respectively, then $[\hat{0},\hat{1}]$ is the unique minimum of $I(\poset)$.
Lindstr\"om~\cite{lindstroem} asked if $I(\Faces(P))$ is the face lattice of a
polytope whenever $\Faces(P)$ is the face lattice of a polytope $P$.
Bj\"orner affirmatively answered Lindstr\"om's question for $3$-dimensional
polytopes. The complete question was resolved in the negative by
Dobbins~\cite{dobbins}.

Bj\"orner~\cite{bjorner} also announced that $I(\Faces(P))$ is the face poset of
a complete fan (or star-convex sphere), the \emph{anti-prism} fan of $P$. We
briefly reconfirm this result by showing that the Moreau fan of $C = \cone( P
\times \{1\} )$ realizes $I(\Faces(P))$. 
Let $\poset\op$ be the dual (or opposite) poset of $\poset$. Then 
\[
    I(\poset) \ \cong \ \{ (a,b)  : a \preceq
    b \} \  \subseteq \ \poset \times \poset\op \, .
\]
The \Def{face lattice} $\Faces(C)$ of $C$ is the collection of nonempty faces
of $C$ partially ordered by inclusion. This is a graded poset ranked by
dimension and we denote by $\Faces_k(C)$ the $k$-dimensional faces of $C$.
For $F \in \Faces(C)$ let
\[
    N_FC \ \defeq \ \{ c \in \R^d : \inner{c,x} \le \inner{c,y} \text{ for
    all } x \in C, y
    \in F \}
\]
be the \Def{normal cone} of $F$ at $C$. In particular the polar to $C$ is
$C^\vee = N_{\lineal(C)} C$ and hence $F \mapsto N_FC$ is an isomorphism from
$\Faces(C)$ to $\Faces(C^\vee) = \Faces(C)\op$.

Let $x \in \R^d$. It follows from $\|x - \pi_C(x)\| \le \|x - z\|$ for all $z
\in C$ such that $x-\pi_C(x) \in N_{F_x}C$. Hence $\Pi_F(C) = F + N_FC$ for all
nonempty faces $F \subseteq C$. Moreover 
\begin{equation}\label{eqn:PiF_intersect}
    (F + N_FC) \cap (G + N_GC) \ = \  (F \cap G) + (N_GC \cap N_FC) \, ,
\end{equation}
which shows the following.

\begin{prop}\label{prop:int_poset_moreau}
    Let $C \subseteq \R^d$ be a polyhedral cone. The face lattice of the
    Moreau fan $\Moreau(C)$ is isomorphic to the interval poset
    $I(\Faces(C))$.
\end{prop}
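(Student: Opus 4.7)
The plan is to combine the description $\Pi_F(C) = F + N_FC$ recorded before the statement with equation~\eqref{eqn:PiF_intersect} to write down every face of $\Moreau(C)$ explicitly and then match the resulting list against the intervals of $\Faces(C)$. Since $\Moreau(C)$ is a complete polyhedral fan, each face is an intersection of some top-dimensional cones $\Pi_{H_1}(C), \dots, \Pi_{H_m}(C)$; iterating \eqref{eqn:PiF_intersect} yields
\[
    \bigcap_{i=1}^m \Pi_{H_i}(C) \ = \ \Bigl(\bigcap_i H_i\Bigr) + \Bigl(\bigcap_i N_{H_i}C\Bigr).
\]
The set $F \defeq \bigcap_i H_i$ is again a face of $C$, and under the antitone bijection $F' \mapsto N_{F'}C$ between $\Faces(C)$ and $\Faces(C^\vee)$ the intersection of the normal cones equals $N_GC$, where $G$ is the smallest face of $C$ containing every $H_i$. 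Since $F \subseteq G$, every face of $\Moreau(C)$ has the form $F + N_GC$ for some interval $[F,G]$ of $\Faces(C)$. Conversely, for each such interval the identity $\Pi_F(C) \cap \Pi_G(C) = F + N_GC$, which uses $F \subseteq G$ and hence $N_GC \subseteq N_FC$, realizes $F + N_GC$ as a face of the fan. This produces a surjection $\Phi : I(\Faces(C)) \to \{\text{faces of } \Moreau(C)\}$ sending $[F,G]$ to $F + N_GC$.

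To show that $\Phi$ is a bijection, I plan to invoke Moreau's orthogonal decomposition: every $x \in \R^d$ admits a unique splitting $x = y + z$ with $y = \pi_C(x) \in C$, $z \in C^\vee$, and $\langle y, z \rangle = 0$. Let $F_x$ be the face of $C$ whose relative interior contains $y$, and let $G_x$ be the unique face of $C$ with $z \in \relint N_{G_x}C$. A direct check from the defining inequality of the normal cone shows $F_x \subseteq G_x$, so the cells $M_{[F,G]} \defeq \{x \in \R^d : F_x = F,\ G_x = G\}$ are naturally indexed by intervals of $\Faces(C)$. These cells partition $\R^d$ into relatively open cones whose closures are precisely $F + N_GC$, and consequently distinct intervals give faces of $\Moreau(C)$ with distinct relative interiors.

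Finally, the order: $F + N_GC \subseteq F' + N_{G'}C$ holds if and only if $F \subseteq F'$ and $N_GC \subseteq N_{G'}C$, which by the antitone bijection is equivalent to $F \subseteq F'$ and $G' \subseteq G$, i.e.\ $[F',G'] \subseteq [F,G]$ as subsets of $\Faces(C)$. This is precisely the reverse-inclusion order defining $I(\Faces(C))$, so $\Phi$ is an order isomorphism. I expect the principal technical step to be the cellular decomposition supplied by Moreau's theorem---specifically the inclusion $F_x \subseteq G_x$ and the identification of $\overline{M_{[F,G]}}$ with $F + N_GC$---after which the remainder of the argument is formal bookkeeping with the antitone bijection $F \mapsto N_FC$.
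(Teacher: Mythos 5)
Your proof is correct and takes essentially the same route as the paper, which simply cites $\Pi_F(C) = F + N_FC$ together with the intersection formula~\eqref{eqn:PiF_intersect} and leaves the remaining bookkeeping implicit; your appeal to Moreau's orthogonal decomposition to certify the bijection fills in exactly that bookkeeping.
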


\subsection{Conical functions}
\newcommand\coneval{\mathfrak{C}}%
\newcommand{\sconeval}{\mathfrak{S}}%
For a subset $S \subseteq \R^d$, we denote its \Defn{indicator function} by
$[S] : \R^d \to \{0, 1\}$, which is defined by $[S](x) = 1$ if and only if $x
\in S$. Let $\coneval_d \subseteq \mathrm{Fun}(\R^d,\Z)$ be the abelian
subgroup spanned by $[C]$ for $C \in \Cones_d$. Since $[C] = [C \cap H^\le] +
[C \cap H^\ge] - [C \cap H]$, the map $C \mapsto [C]$ is a valuation.
Moreover, any homomorphism $\widetilde\phi : \coneval_d \to G$ gives a
valuation on cones by $ \phi(C) \defeq \widetilde\phi([C]) \, $ and
Groemer~\cite{gromer} showed every cone valuation arises that way.

\newcommand\eulerc{\epsilon}%
For a cone $C \subseteq \R^d$ define
\[
    V_k(C) \ \defeq \ \sum_{F \in \Faces_k(C)} [\Pi_F(C)] \ \in \ \coneval_d \, .
\]
This is an indicator generalization of the spherical intrinsic volumes and
$v_k(C)$ is recovered from $V_k$ by taking the spherical volume.  Note that
$V_k : \Cones_d \to \coneval_d$ is not a valuation:  For the augmentation
$\eulerc : \coneval_d \to \Z$ with $\eulerc([C]) = 1$ for all nonempty cones
$C$, we see that $\eulerc(V_k(C))$ is the number of $k$-dimensional faces of
$C$, which is not a valuation.  Nonetheless, we can view $V_k$ as a valuation
taking values in the group of \Defn{simple} indicator functions
\[
    \sconeval_d \ \defeq \ \coneval_d / \langle [C] : C \in \Cones_d, \dim C <
    d  \rangle \, .
\]

\begin{thm}\label{thm:Vk_val}
    Let $C \subset \R^d$ be a cone and $H$ a hyperplane. Then
    \[
        V_k(C) \ = \ V_k(C \cap H^\le) + V_k(C \cap H^\ge)  - V_k(C \cap H) 
    \]
    as simple indicator functions.
\end{thm}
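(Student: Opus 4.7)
The plan is to check the identity pointwise on the complement of a lower-dimensional exceptional subset of $\R^d$ and conclude equality in $\sconeval_d$: any element of $\coneval_d$ supported on a finite union of cones of dimension less than $d$ lies in the defining subgroup, so pointwise agreement outside a lower-dimensional set suffices. The exceptional set one must avoid consists of the boundaries of the Moreau fans of the four cones $C$, $C\cap H^{\le}$, $C\cap H^{\ge}$, and $C\cap H$, together with the set $\bigcup_{F\not\subseteq H}\bigl((F\cap H)+N_F C\bigr)$, where $F$ runs over the faces of $C$ not contained in $H$. The latter has dimension at most $d-1$ because $F\not\subseteq H$ forces $\dim(F\cap H)=\dim F-1$, so that outside of this exceptional set one has the crucial implication $\pi_C(x)\in H \Rightarrow F_x\subseteq H$.

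For such a generic $x$, I would split on the position of $\pi_C(x)$ relative to $H$. Suppose first that $\pi_C(x)\in H^{<}$. Then $\pi_C(x)$ remains feasible and optimal on the smaller cone $C\cap H^{\le}$, so $\pi_{C\cap H^{\le}}(x)=\pi_C(x)$, and the face of $C\cap H^{\le}$ whose relative interior contains this point is either $F_x$ itself (when $F_x\subseteq H^{\le}$) or the slice $F_x\cap H^{\le}$ (when $F_x$ meets $H^{>}$); in both cases it has the same dimension as $F_x$, because the linear functional that cuts out $F_x$ inside $C$ also cuts out $F_x\cap H^{\le}$ inside $C\cap H^{\le}$, and $\relint(F_x\cap H^{\le})=\relint(F_x)\cap H^{<}$. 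On the other side, the halfspace $H^{\ge}$ is violated by $\pi_C(x)$, so the constraint is active and $\pi_{C\cap H^{\ge}}(x)\in H$; hence $\pi_{C\cap H^{\ge}}(x)=\pi_{C\cap H}(x)$, and a short argument (a face of $C\cap H^{\ge}$ whose relative interior meets $H$ must lie in $H$) shows that the carrying faces in $C\cap H^{\ge}$ and in $C\cap H$ coincide. Thus $V_k(C\cap H^{\ge})(x)=V_k(C\cap H)(x)$, and the right-hand side reduces to $V_k(C\cap H^{\le})(x)=V_k(C)(x)$. The case $\pi_C(x)\in H^{>}$ is symmetric. If instead $\pi_C(x)\in H$, genericity forces $F_x\subseteq H$, so $\pi_C(x)$ is optimal for all four projection problems with common carrying face $F_x$, and the identity collapses to $[\dim F_x=k]=[\dim F_x=k]+[\dim F_x=k]-[\dim F_x=k]$.

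The main obstacle will be the geometric bookkeeping in the first two cases, where the face lattices of $C$ and of $C\cap H^{\le}$ do not coincide and the carrying face of the subcone may be the slice $F_x\cap H^{\le}$ rather than $F_x$ itself. The uniform observation that carries this through is that if $F=\{y\in C:\langle c,y\rangle=m\}$ is a face of $C$ meeting $H^{<}$ nontrivially, then the same functional $c$ selects $F\cap H^{\le}$ as a face of $C\cap H^{\le}$ with the same affine hull, so the carrying face always has the same dimension as $F_x$. Combined with the intersection formula~\eqref{eqn:PiF_intersect}, this reduces the whole assertion to the elementary two-term inclusion--exclusion $[C]=[C\cap H^{\le}]+[C\cap H^{\ge}]-[C\cap H]$ applied face by face.
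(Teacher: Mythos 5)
Your proof is correct and follows essentially the same route as the paper's: both verify the identity pointwise at generic $x$, split on the position of $\pi_C(x)$ relative to $H$, show that when $\pi_C(x)$ lies strictly on one side the projections and carrying faces on the near halfspace match those of $C$ (same dimension) while those on the far halfspace match those of $C\cap H$, and dispose of the boundary case $\pi_C(x)\in H$ by genericity. The only cosmetic difference is that you spell out the exceptional set explicitly as $\bigcup_{F\not\subseteq H}((F\cap H)+N_FC)$ where the paper uses an informal perturbation argument; this is a welcome clarification but not a different argument.
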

\begin{proof}
    Observe that $f = g$ for $f,g \in \sconeval_d$ if $f(x) = g(x)$ for almost
    all $x \in \R^d$.  Thus, let $x \in \R^d$ a \emph{generic} point.
    Let $C^\le, C^\ge,$ and $C^=$ denote $C \cap H^\le, C \cap H^\ge,$ and $C
    \cap H$, respectively.  Let $\pi_C(x) = y$ and $F \subseteq C$ the unique
    face with $y \in \relint(F)$. We may assume that $y \in H^\le$, that is,
    $y = \pi_{C^\le}(x)$ and $F^\le = F \cap H^\le$. Define $y^=, y^\ge$ with
    corresponding faces $F^=,F^\ge$. 

    If $y = y^\ge$, then $y \in H$ and $F \cap H \neq \emptyset$.  If $F
    \subseteq H$, then $F = F^\le = F^= = F^\ge$ and we are done. The case
    $F \not\subseteq H$ is not relevant, as $x$ is generic: perturbing $x$
    parallel to $\lin(F)$ moves $y$ away from $H$.

    If $y^\ge \neq y$, then $y^\ge = y^=$. Indeed, any point on
    the segment $z \in [y,y^\ge)$ satisfies $\|x-z\| < \|x-y^\ge\|$ and
    $[y,y^\ge]$ meets $H$. It follows that $F^\ge = F^=$ and $\dim F = \dim
    F^\le$. This proves the claim.
\end{proof}

The spherical volume $\sigma_{d-1}(C) \defeq \frac{\vol_d(C \cap
B_d)}{\vol_d(B_d)}$ is a simple valuation and hence extends to a linear
function $\sigma_{d-1} : \sconeval_d \to \R$.

\begin{cor}\label{cor:vk_val}
    The spherical volumes $v_k(C) = \sigma_{d-1}(V_K(C))$ are valuations.
\end{cor}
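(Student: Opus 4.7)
The plan is to combine Theorem~\ref{thm:Vk_val} with the linearity of the spherical volume functional $\sigma_{d-1}$ on $\sconeval_d$. First I would verify the displayed identity $v_k(C) = \sigma_{d-1}(V_k(C))$, and then use this identity to transport the valuation property from $V_k$ to $v_k$.

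For the identity, the task reduces to showing that $V_k(C)$ and $[\Pi_k(C)]$ represent the same element of $\sconeval_d$, i.e.\ agree outside a set of $d$-dimensional measure zero. For distinct $k$-dimensional faces $F, F' \in \Faces_k(C)$, I would compute the intersection $\Pi_F(C) \cap \Pi_{F'}(C)$ using $\Pi_F(C) = F + N_FC$ together with equation~\eqref{eqn:PiF_intersect}: this intersection equals $(F \cap F') + (N_FC \cap N_{F'}C)$, where the second summand is the normal cone at the smallest face $F \vee F'$ containing both. A quick dimension count, using $\dim G + \dim N_GC = d$ for every face $G \preceq C$, shows this intersection has dimension $d - \dim(F \vee F') + \dim(F \cap F') < d$. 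Hence the overlaps in $V_k(C) = \sum_{F \in \Faces_k(C)} [\Pi_F(C)]$ are negligible, and $V_k(C) = [\Pi_k(C)]$ in $\sconeval_d$. Applying $\sigma_{d-1}$ then yields $\sigma_{d-1}(V_k(C)) = \vol(\Pi_k(C) \cap B_d)/\vol(B_d) = v_k(C)$.

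With this identity in hand, the valuation property follows painlessly: apply the linear map $\sigma_{d-1} : \sconeval_d \to \R$ to the identity of Theorem~\ref{thm:Vk_val} to obtain
\[
    v_k(C) \ = \ v_k(C \cap H^\le) + v_k(C \cap H^\ge) - v_k(C \cap H) \, ,
\]
which is the weak valuation property for $v_k$. By Sallee's result already invoked in Section~\ref{sec:fan_vals}, every weak valuation on polyhedral cones is a valuation, completing the argument. The only mildly subtle step is the dimension count verifying $V_k(C) = [\Pi_k(C)]$ in $\sconeval_d$; everything else is formal.
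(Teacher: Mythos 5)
Your proposal is correct and takes essentially the same route the paper intends: the paper states the corollary without a separate proof, relying on the observation just before it that $\sigma_{d-1}$ descends to a linear functional on $\sconeval_d$, so that composing with the (weak) valuation of Theorem~\ref{thm:Vk_val} and upgrading via Sallee gives the result. Your extra work verifying $v_k(C) = \sigma_{d-1}(V_k(C))$ by the dimension count on $\Pi_F(C) \cap \Pi_{F'}(C)$ via \eqref{eqn:PiF_intersect} is exactly the detail the paper glosses over with the phrase ``$v_k(C)$ is recovered from $V_k$ by taking the spherical volume,'' and the count is sound: $\dim(F\cap F') < \dim(F\vee F')$ for distinct $k$-faces $F\ne F'$ forces the overlap to have dimension $< d$.
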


Similar to the situation for polytopes, the valuations
$V_k$ separate $\coneval_d$. The proof of the next result needs the
following observation.

\begin{lem}\label{lem:oV_polar}
    Let $C \subseteq \R^d$ be a cone and $0 \le k \le d$. Then $V_k(C) =
    V_{d-k}(C^\vee)$.
\end{lem}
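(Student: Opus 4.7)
The plan is to match the sum $V_k(C) = \sum_{F \in \Faces_k(C)} [\Pi_F(C)]$ term-by-term with $V_{d-k}(C^\vee) = \sum_{G \in \Faces_{d-k}(C^\vee)} [\Pi_G(C^\vee)]$ by means of the polar duality $F \mapsto N_F C$. Concretely, I will establish (i) that $F \mapsto N_F C$ restricts to a bijection $\Faces_k(C) \to \Faces_{d-k}(C^\vee)$ and (ii) that $\Pi_F(C) = \Pi_{N_F C}(C^\vee)$ as subsets of $\R^d$. Summing indicators and reindexing then yields the equality pointwise, hence in $\coneval_d$.

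For (ii), I would invoke the formula $\Pi_F(C) = F + N_F C$ derived just before Proposition~\ref{prop:int_poset_moreau}. Applying the same formula to the polar cone $C^\vee$ gives $\Pi_G(C^\vee) = G + N_G(C^\vee)$, and biduality for cones $N_{N_F C}(C^\vee) = F$ immediately yields
\[
\Pi_{N_F C}(C^\vee) \;=\; N_F C + F \;=\; \Pi_F(C).
\]
Conceptually, this says that the Moreau fan is intrinsically self-polar: the cell $\Pi_F$ of $\Moreau(C)$ coincides as a subset of $\R^d$ with the cell $\Pi_{N_F C}$ of $\Moreau(C^\vee)$.

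For (i), the dimension identity $\dim F + \dim N_F C = d$ is the standard polar identity when $C$ is full-dimensional. In general the lineality space of $C^\vee$ equals $\lin(C)^\perp$: splitting any $c \in \R^d$ as $c_1 + c_2$ with $c_1 \in \lin(C)$ and $c_2 \in \lin(C)^\perp$, one checks that $c \in N_F C$ iff $c_1$ is the polar face of $F$ taken inside $\lin(C)$, so that $N_F C = (N_F C \cap \lin(C)) + \lin(C)^\perp$. Combining the full-dimensional formula applied inside $\lin(C)$ with $\dim \lin(C)^\perp = d - \dim \lin(C)$ gives $\dim F + \dim N_F C = d$, and hence the claimed bijection on dimension strata. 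I expect this dimension bookkeeping in the presence of nontrivial lineality to be the only step requiring care; once (i) and (ii) are in hand, reindexing the sum by $G = N_F C$ gives $V_k(C) = V_{d-k}(C^\vee)$.
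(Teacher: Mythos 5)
The paper states this lemma without proof, treating it as an immediate consequence of the material just established, namely the formula $\Pi_F(C) = F + N_F C$ and the anti-isomorphism $F \mapsto N_F C$ from $\Faces(C)$ to $\Faces(C^\vee)$. Your proposal supplies exactly the argument the paper has in mind: biduality $N_{N_F C}(C^\vee) = F$ gives $\Pi_{N_F C}(C^\vee) = N_F C + F = \Pi_F(C)$, and the dimension count $\dim F + \dim N_F C = d$ (correctly handled in the presence of lineality by splitting along $\lin(C) \oplus \lin(C)^\perp$) shows $F \mapsto N_F C$ restricts to a bijection $\Faces_k(C) \to \Faces_{d-k}(C^\vee)$, so the two sums of indicators match term by term. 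The only stylistic quibble is the phrase ``$c_1$ is the polar face of $F$''; you mean $c_1$ lies in the normal cone of $F$ computed inside $\lin(C)$, which is clearly your intent. The proof is correct and complete.
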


\begin{thm}\label{thm:oV-injec}
    Let $C \subseteq \R^d$ be a cone and $\dim \lineal(C) \le k \le \dim C$.
    If $2k \neq d$, then $C$ can be recovered from
    $V_k(C)$.
\end{thm}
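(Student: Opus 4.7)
The plan is to recover $C$ from $V_k(C)$ by extracting the top-dimensional pieces of the Moreau fan $\Moreau(C)$ corresponding to $k$-faces, disentangling each piece into its ``$F$-factor'' and ``$N_F C$-factor,'' and then reassembling $C$. Since $V_k(C) = \sum_{F \in \Faces_k(C)} [\Pi_F(C)]$ with $\Pi_F(C) = F + N_F C$, and since distinct pieces $\Pi_F(C), \Pi_G(C)$ meet in sets of dimension strictly less than $d$ by equation~\eqref{eqn:PiF_intersect}, the simple indicator $V_k(C) \in \sconeval_d$ determines the collection $\{\Pi_F(C) : F \in \Faces_k(C)\}$ as the maximal convex sub-cones of its support.

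Next, each piece $D = F + N_F C$ carries a natural orthogonal Minkowski decomposition relative to the splitting $\R^d = \lin(F) \oplus \lin(F)^\perp$, with $F \subseteq \lin(F)$ of dimension $k$ and $N_F C \subseteq \lin(F)^\perp$ of dimension $d-k$. Dually, the polar $D^\vee$ lies in $\lineal(C)^\perp \cap \lin(C)$ and factors as a direct sum of two pointed cones of dimensions $k - \dim \lineal(C)$ and $\dim C - k$. Under the hypothesis $2k \ne d$, this orthogonal decomposition is unique, enabling the unambiguous identification of $F$ as the $k$-dimensional factor.

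Once the $k$-faces $\{F\}$ and their normal cones $\{N_F C\}$ have been extracted, I reassemble $C$ as follows. The union $\bigcup_F N_F C$ equals the $(d-k)$-skeleton of $C^\vee$; since the range $\dim \lineal(C) \le k$ gives $d - k \le \dim C^\vee$, this skeleton contains all one-dimensional faces of $C^\vee$, and together with the lineality $\lin(C)^\perp$ of $C^\vee$ (recoverable as the appropriate component of the shared lineality $\lineal(C) + \lin(C)^\perp$ of the pieces) it determines $C^\vee$, from which $C = (C^\vee)^\vee$ follows by biduality.

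The main obstacle is the second step --- establishing uniqueness of the orthogonal decomposition $D = F + N_F C$ and singling out $F$ as the $k$-factor. The condition $2k \ne d$ is essential here: if $2k = d$ then $F$ and $N_F C$ have the same dimension, and Lemma~\ref{lem:oV_polar} yields $V_k(C) = V_k(C^\vee)$, reflecting a polar-duality swap ambiguity $F \leftrightarrow N_F C$ that must be broken for recovery to succeed.
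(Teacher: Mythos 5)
Your first step matches the paper exactly: both you and the authors recover the family $\{\Pi_F(C) : F \in \Faces_k(C)\}$ from the support of $V_k(C)$ using~\eqref{eqn:PiF_intersect}. The gap is in your second step, the claim that each piece $D = F + N_FC$ has a \emph{unique} orthogonal Minkowski decomposition into factors of dimensions $k$ and $d-k$ when $2k \neq d$. This is false. Take $C = \R^3_{\ge 0}$, $k=1$, $F = \Rnn e_1$; then $D = \Pi_F(C) = \Rnn e_1 + (-\Rnn e_2) + (-\Rnn e_3)$ is an orthant, and it decomposes orthogonally into a $1$-dimensional and a $2$-dimensional cone in at least three different ways (pulling out any one of the three generating rays). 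The condition $2k\neq d$ rules out the single symmetry $F \leftrightarrow N_FC$ that you identify, but it does nothing to prevent these other decompositions, so you cannot read off $F$ as ``the $k$-dimensional factor'' of $D$. Your reassembly step also has a smaller issue: the shared lineality $\lineal(\Pi_F(C)) = \lineal(C) + \lin(C)^\perp$ cannot in general be split into its two summands from dimension data alone, since $\dim\lineal(C)$ and $d - \dim C$ may coincide.

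The paper avoids the decomposition question entirely by a combinatorial argument on the $k$-faces of the cones $\Pi_F(C)$. Every $k$-face $E$ of $\Pi_F(C)$ has the form $E = E' + N_{E''}C$ with faces $E' \subseteq F \subseteq E''$ and $\dim E'' - \dim E' = d-k$. Calling $E$ \emph{free} if it is not of the form $\Pi_F(C) \cap \Pi_G(C)$ for another $k$-face $G$, one checks using $d - k > k$ (after the polarity reduction to $k < d/2$ via Lemma~\ref{lem:oV_polar}) that $E$ is free exactly when $F = E'$, which forces $E = F$ itself. Thus the free $k$-faces of the recovered cones $\Pi_F(C)$ are precisely the $k$-faces of $C$, from which $C$ is reconstructed. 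This sidesteps the non-uniqueness of orthogonal splittings that your argument runs into.
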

By the previous lemma, we can directly see, that for $2k = d$ we have
$V_k(C) = V_k(C^\vee)$, making this assumption necessary.
\begin{proof}
    First note that $V_k(C) = 0$ whenever $k > \dim C$ or, by
    Lemma~\ref{lem:oV_polar}, $k < \dim \lineal(C)$.  

    Now, let $S \subset \R^d$ be the collection of points for which $V_k(C)$
    does not vanish on a small neighborhood. This is the union of the
    interiors of $\Pi_F(C) = F + N_F C$, where $F$ ranges over the
    $k$-dimensional faces of $C$. It follows from~\eqref{eqn:PiF_intersect}
    that $\dim \Pi_F(C) \cap \Pi_G(C) < d$ for any two distinct $F, G \in
    \Faces_k(C)$ and hence we can recover the cones $\Pi_F(C)$ from $S$.

    We may use Lemma~\ref{lem:oV_polar} to assume that
    $k < \frac{d}{2}$.  Every $k$-face $E$ of $\Pi_F(C) = F + N_FC$ is
    of the form $E = E' + N_{E''}C$, where
    $E' \subseteq F \subseteq E''$ are faces with
    $d - k = \dim E'' - \dim E'$. We say that $E$ is \Defn{free}, if
    there exists no $k$-face $G$ of $C$, such that
    $E = \Pi_F(C) \cap \Pi_G(C)$. Equivalently, $E$ is not free, if
    and only if there exist another $k$-faces $G \neq F$ such that
    $E' \subseteq G \subseteq E''$. Thus $E$ is free, if and only if
    $F = E'$ or $F = E''$, and since $\dim E'' - \dim E' = d - k > k$,
    the case $F = E''$ is impossible. Note that if $F = E'$, then
    $E'' = C$, so $E = F + N_CC = F$.
    Therefore, the set of all free faces is precisely the set
    $k$-faces of $C$, from which we can recover $C$.
\end{proof}

\newcommand{\oX}{\overline{\mathcal{X}}}%
\subsection{Characteristic indicators}
Corollary~\ref{cor:klivans_swartz} can be generalized to the setting of
indicator functions. Let $\rho : \coneval_d \to \sconeval_d$ be the canonical
projection. Then we define $\oX : \Fans_d \to \sconeval_d[t]$ as
\[
    \oX_\Fan(t) \ \defeq \ \sum_{k = 0}^d \rho(V_k(\Fan)) t^k \ \in \
    \sconeval_d[t]\,.
\]
This is a natural generalization of $\ochi_\Fan(t)$.

\begin{cor}\label{cor:klivans_swartz_indi}
    Let $\Arr$ be an arrangement and $0 \leq k \leq d$. Then as elements of
    $\sconeval_d[t]$
    \begin{equation}\label{eq:indicator_klivans_swartz}
        \oX_{\Fan(\Arr)}(t) \ = \ \ochi_\Arr(t) \cdot \rho([\R^d]) \, .
    \end{equation}
\end{cor}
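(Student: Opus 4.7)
The plan is to mimic the strategy of Corollary~\ref{cor:klivans_swartz} one level up, replacing scalar intrinsic volumes with their indicator lifts. Concretely, I would show that both sides of~\eqref{eq:indicator_klivans_swartz} satisfy the same deletion-restriction recurrence and agree on the base case of singleton arrangements.

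First, I would observe that for each $0 \le k \le d$, the composition $\rho \circ V_k : \Cones_d \to \sconeval_d$ is a weak valuation by Theorem~\ref{thm:Vk_val}; Sallee's result together with formula~\eqref{eqn:induced_val} and Proposition~\ref{prop:cone_val} then makes $\Fan \mapsto \rho(V_k(\Fan)) = \sum_{C \in \Fan} \rho(V_k(C))$ a fan valuation with values in $\sconeval_d$. Summing the formal $t^k$-combination, $\Fan \mapsto \oX_\Fan(t)$ is a fan valuation into $\sconeval_d[t]$. Applying Proposition~\ref{prop:fan_del_contr} coordinate-wise, I get, for any non-singleton $\Arr$ and any $H \in \Arr$,
\[
    \oX_{\Fan(\Arr)}(t) \ = \ \oX_{\Fan(\Arr \Delete H)}(t) + \oX_{\Fan(\Arr \Restrict H)}(t),
\]
which matches the recurrence~\eqref{eqn:ochi} for $\ochi_\Arr(t)$.

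For the base case, let $\Arr^k = \{H\}$ be a $k$-singleton in a $k$-dimensional subspace $L \subseteq \R^d$, so that $\Fan(\Arr^k) = \{H^\le \cap L,\, H^\ge \cap L\}$. I would compute the Moreau fan of one halfspace, say $D = H^\le \cap L$, directly: its only proper face is the lineality space $L \cap H$ of dimension $k-1$, and one checks that $\Pi_D(D) = \{x \in \R^d : \pi_L(x) \in H^\le\}$ and $\Pi_{L \cap H}(D) = \{x \in \R^d : \pi_L(x) \in H^\ge\}$, both full-dimensional halfspaces of $\R^d$. All other $\Pi_F(D)$ have dimension less than $d$, hence vanish under $\rho$. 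Summing the contributions of the two regions, the two full-dimensional halfspaces in each $V_j$ cover $\R^d$ and meet in a hyperplane of measure zero, so $\rho(V_j(\Fan(\Arr^k))) = \rho([\R^d])$ for $j \in \{k-1,k\}$ and $0$ otherwise. This yields
\[
    \oX_{\Fan(\Arr^k)}(t) \ = \ (t^k + t^{k-1})\, \rho([\R^d]) \ = \ \ochi_{\Arr^k}(t) \cdot \rho([\R^d]),
\]
and induction on $|\Arr|$ finishes the argument.

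The main obstacle is the base case: one must carefully handle that $\Arr^k$ lives in a proper subspace $L$, so the Moreau fan of $H^\le \cap L$ as a cone in $\R^d$ has a large lineality space and one needs to verify that the two relevant $\Pi_F$'s are indeed full-dimensional while the remaining ones drop dimension. Once this is nailed down, everything else is a clean bookkeeping via Proposition~\ref{prop:fan_del_contr} and the recurrence~\eqref{eqn:ochi}.
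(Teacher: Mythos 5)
Your proposal follows the same route as the paper: show that $\Fan \mapsto \oX_\Fan(t)$ is a fan valuation and therefore satisfies the deletion-restriction recurrence of Proposition~\ref{prop:fan_del_contr}, then match the recurrence~\eqref{eqn:ochi} for $\ochi_\Arr(t)$ by computing the base case $\oX_{\Fan(\Arr^k)}(t) = (t^k + t^{k-1})\,\rho([\R^d])$. The paper states the base case without elaboration (``applies verbatim on noting \dots''), whereas you supply the needed details on the Moreau fan of a halfspace inside a proper subspace $L$, in particular that $\Pi_D(D)$ and $\Pi_{L\cap H}(D)$ are the two full-dimensional halfspaces $\{x : \pi_L(x)\in H^\le\}$ and $\{x : \pi_L(x)\in H^\ge\}$; one tiny slip is the phrase ``all other $\Pi_F(D)$ have dimension less than $d$'' --- a halfspace $D$ has only the two faces $D$ and $L\cap H$, so the remaining $V_j(D)$ vanish because there are no $j$-faces, not because of dimension drop, but this does not affect the argument.
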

\begin{proof}
    The proof of Corollary~\ref{cor:klivans_swartz} applies verbatim on noting
    that 
    \[
        \oX_{\Fan(\Arr^k)}(t) \ = \ (t^k + t^{k-1}) \cdot
        \rho([\R^d]) \ = \ \ochi_{\Arr^k}(t) \cdot \rho([\R^d])
    \]
    for all $k$-singletons $\Arr^k$, $1 \leq k \leq d$.
\end{proof}

Already in dimension $2$ one can see that \eqref{eq:indicator_klivans_swartz}
holds only for generic points.  It was shown in~\cite{Kabluchko2020} that
the \emph{exceptional set} of points $x \in \R^d$ where $V_k(\Arr)(x) \neq
w_k(\Arr)$ is a hyperplane arrangement. We will slightly generalize the
results of~\cite{Kabluchko2020} with a simpler proof that prompts a simple
interpretation for the exceptional set.

For the proof, as well as the precise statement of our results, we
recall two well-known ring structures on $\coneval_d$. First, note
that $(\coneval_d, +, \cdot)$ is a commutative ring with unit $1 =
[\R^d]$ with respect to pointwise multiplication of functions.
 For $C, C' \in \Cones_d$ we have 
\[
    ([C] \cdot [C'])(x) \ \defeq \ [C](x) \cdot [C'](x) \ = \ [C \cap C'](x)\,.
\]
A second ring structure $(\coneval_d, +, \ast)$ is obtained with
respect to taking conical hulls $C \vee C' \defeq \cone(C \cup C')$:
\[
    ([C] \ast [C'])(x) \ \defeq \  [C \vee C'](x)\,.
\]
These two ring structures are related via polarity: The polarity map
given by $[C] \mapsto [C^\vee]$ is linear and since $(C \cap D)^\vee =
C^\vee + D^\vee$ for all $C, D \in \Cones_d$ for which $C \cup D$ is
convex, we see that polarity gives an isomorphism of rings $(\coneval, +,
\cdot) \cong (\coneval, +, \ast)$.

Let $\Arr$ be a hyperplane arrangement in a subspace $U$ of $\R^d$.
The \Defn{lattice of flats} $\Flats(\Arr)$ is the collection of
subspaces formed by intersections of hyperplanes in $\Arr$ ordered by
\emph{reverse} inclusion. The minimal element is $\Bot = U$ and
$\Top = \lineal(\Arr)$ is the
maximal element. The \Defn{M\"obius function} $\mu_\poset$ of a finite
partially ordered set $(\poset, \preceq)$ is recursively defined for
$x \preceq y \in \poset$ as follows: If $x = y$, then
$\mu_\poset(x, y) = 1$, otherwise
\[
    \mu_\poset(x, y) \ = \ -\sum_{x \preceq z \precneq y}
    \mu_\poset(x, z)\,.
\]
We will suppress the subscript of $\mu$ if $\poset$ is clear from the
context. It is well known that $\mu = \mu_{\Flats(\Arr)}$ alternates
in sign, or, more precisely,
$|\mu(L, M)| = (-1)^{\dim L - \dim M} \mu(L, M)$. Let
$\delta(L, K) = 1$ if $L = K$ and $= 0$ otherwise. Denote by
$\Faces(\Fan)$ the set of all nonempty faces of all cones of
$\Fan$. We will first show the following lemma, which will be
essential to our generalization of
Corollary~\ref{cor:klivans_swartz_indi}:

\begin{lem}\label{lem:key}
    Let $\Arr$ be a hyperplane arrangement with lattice of flats
    $\Flats(\Arr)$.  Then 
    \[
        \sum_{C \in \Fan(\Arr)} [C] \ = \ \sum_{L \in \Flats(\Arr)} |\mu(\Bot, L)|
        \cdot [L]
    \]
    as elements in $\coneval_d$.

\end{lem}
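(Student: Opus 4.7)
The plan is to verify the identity pointwise, using that $\coneval_d$ embeds in $\mathrm{Fun}(\R^d,\Z)$. Since both sides vanish outside $U$, I fix an arbitrary $x \in U$ and compare the two scalars. Let $L_x \defeq \bigcap\{L \in \Flats(\Arr) : x \in L\}$, which is itself a flat (because $\Flats(\Arr)$ is intersection-closed) and is the inclusion-minimal flat containing $x$.

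For the right-hand side, $[L](x) = 1$ if and only if $L \supseteq L_x$, so
\[
    \text{RHS}(x) \ = \ \sum_{L_x \subseteq L} |\mu_{\Flats(\Arr)}(\Bot, L)|.
\]
In the reverse-inclusion order, the indexing set is exactly the interval $[\Bot, L_x]$ in $\Flats(\Arr)$. Introducing the localization $\Arr_{L_x} \defeq \{H \in \Arr : H \supseteq L_x\}$, this interval is canonically isomorphic to $\Flats(\Arr_{L_x})$ (its maximum being $L_x = \bigcap \Arr_{L_x}$), and the isomorphism preserves the Möbius function since it respects all subintervals. Hence $\text{RHS}(x) = \sum_{M \in \Flats(\Arr_{L_x})} |\mu(\Bot, M)|$, which by Zaslavsky's theorem equals the number of regions of $\Arr_{L_x}$.

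For the left-hand side, $\text{LHS}(x)$ counts the top-dimensional cones $C \in \Fan(\Arr)$ with $x \in C$. The key geometric observation is that $x$ lies strictly on one side of every hyperplane $H \in \Arr \setminus \Arr_{L_x}$, so there is a radius $r > 0$ such that the open ball $B(x,r)$ meets only the hyperplanes in $\Arr_{L_x}$. Consequently, the sign vector of a cone $C \ni x$ relative to $\Arr \setminus \Arr_{L_x}$ is fixed, while the sign vector relative to $\Arr_{L_x}$ can take any value realized as a region of $\Arr_{L_x}$. This yields a bijection between the top-dimensional cones of $\Fan(\Arr)$ containing $x$ and the regions of $\Arr_{L_x}$, giving $\text{LHS}(x) = \text{RHS}(x)$.

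The main obstacle is justifying the bijection in the previous paragraph rigorously: one must argue that every sign pattern realizing a region of $\Arr_{L_x}$ is actually attained by a maximal cone of $\Fan(\Arr)$ containing $x$, and conversely that no two such cones share the same restricted sign pattern. This is standard but warrants a careful argument using the ball $B(x,r)$ together with the fact that hyperplanes in $\Arr_{L_x}$ all pass through $x$. An alternative route, more in the spirit of Section~\ref{sec:fan_vals}, would be to observe that $\Arr \mapsto \sum_{C \in \Fan(\Arr)} [C]$ is a fan valuation and hence satisfies deletion-restriction by Proposition~\ref{prop:fan_del_contr}, and then to verify that the right-hand side obeys the same recurrence and agrees on singletons; however, verifying the indicator-level recurrence for $\sum_L |\mu(\Bot,L)|[L]$ under deletion and restriction is more intricate than the direct pointwise comparison above.
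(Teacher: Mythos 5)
Your proof is correct, but it takes a genuinely different route from the paper. The paper's argument is algebraic and stays entirely inside $\coneval_d$: it expands $[L]$ as $\sum_{F \subseteq L} [\relint F]$ over faces $F$ of the fan, applies the Euler map $\Euler$ (which sends $[C] \mapsto (-1)^{\dim C}[\relint C]$ and is an involution) to rewrite $[L]$ as an alternating sum $\sum_{F \subseteq L}(-1)^{\dim L - \dim F}[F]$, then swaps the order of summation and invokes the defining recursion of the M\"obius function, $\sum_{L \supseteq \lin(F)} \mu(\Bot,L) = \delta(\Bot, \lin(F))$, so that only the full-dimensional faces $F$ (the closed regions) survive. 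Your proof instead evaluates both sides at a point $x$, reduces to the localization $\Arr_{L_x}$ at the minimal flat through $x$, and then applies Zaslavsky's theorem to identify both scalars with the number of regions of $\Arr_{L_x}$. What each buys: your pointwise argument is geometrically transparent and quickly dispatches the right-hand side, but it uses Zaslavsky's theorem as a black box and requires the local-cone bijection between regions of $\Fan(\Arr)$ through $x$ and regions of $\Arr_{L_x}$ --- which you correctly flag as the step needing a careful ball argument. The paper's computation is more self-contained (needing only the M\"obius recursion and the Euler relation rather than Zaslavsky) and illustrates the indicator-function calculus that the rest of Section~\ref{sec:indicator} is built on, which is presumably why the authors chose it. One small remark: the local bijection you describe is correct --- take $y = (1-t)x + tz$ with $z$ in a region $R$ of $\Arr_{L_x}$ and $t$ small enough that $y \in B(x,r)$; convexity and openness of $R$ give $y \in R$, and $y$ then realizes the desired sign vector for $\Arr$ --- so your proof is complete once that paragraph is spelled out.
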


\newcommand\Euler{\mathcal{E}}%
Observe that the map $C \mapsto (-1)^{\dim C}[\relint(C)] \in \coneval_d$ is a
valuation. The \Def{Euler map} is the induced homomorphism $\Euler :
\coneval_d \to \coneval_d$ and one checks $\Euler \circ \Euler =
\mathrm{id}$. 

\begin{proof} 
    Let $\Arr$ be an arrangement in the $d$-dimensional subspace $U$ and let
    $L \in \Flats(\Arr)$. Since 
    \[
        [L] \ = \ \sum_{F \in \Faces(\Arr) , F
    \subseteq L } [\relint(F)] 
    \]
    It follows that 
    \[
        [L] \ = \ (-1)^{\dim L } \Euler([L]) \ = \ 
        \sum_{F \in \Faces(\Arr),  F \subseteq L} (-1)^{\dim L -
        \dim F} [F] \, .
    \]
    We calculate:
    \begin{align*}
      \sum_{L \in \Flats(\Arr)} |\mu(\Bot, L)| \cdot [L]
      \ =& \ \sum_{L \in \Flats(\Arr)} (-1)^{d - \dim L}\mu(\Bot, L) \sum_{F \in\Faces(\Arr) \atop F \subseteq L} (-1)^{\dim L - \dim F} [F]\\
      \ =& \ \sum_{F \in\Faces(\Arr)} (-1)^{d - \dim F} \sum_{L \in
        \Flats(\Arr) \atop F \subseteq L} \mu(\Bot, L) [F]\\
      \ =& \ \sum_{F \in\Faces(\Arr)} (-1)^{d - \dim F} \delta(\Bot, \lin(F)) [F]
           \ = \ \sum_{C \in \Fan(\Arr)} [C]\,.\qedhere
    \end{align*}
\end{proof}

\begin{thm}\label{thm:Vk_arr}
    For all $0 \leq k \leq d$, as elements in $\coneval_d$:
    \begin{equation}\label{eq:Vk_arr}
        V_k(\Arr)
        \ = \ \sum_{L \in \Flats_k(\Arr)}
        \Big(\sum_{K \in \Flats(\Arr) \atop K \subseteq L} |\mu(L, K)| \cdot [K]\Big) \ast
        \Big(\sum_{M \in \Flats(\Arr) \atop L \subseteq M} |\mu(\Bot, M)| \cdot [M^\perp]\Big)
    \end{equation}
\end{thm}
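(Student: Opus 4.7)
The plan is to decompose $V_k(\Arr)$ according to the flat $L \in \Flats_k(\Arr)$ containing the $k$-face and, for each $L$, realise the two parenthesised factors from two applications of Lemma~\ref{lem:key}, one on each side of $L$, with polarity converting the second one into the required form.

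First, I would expand
\[
  V_k(\Arr) \ = \ \sum_{C \in \Fan(\Arr)}\sum_{F \in \Faces_k(C)} [\Pi_F(C)]
\]
and group the pairs $(C,F)$ by $L \defeq \lin F \in \Flats_k(\Arr)$. The defining inequality forces $c \in N_FC$ to be constant on $F$, so $N_FC = C^\vee \cap L^\perp$; thus $F \subseteq L$ and $N_FC \subseteq L^\perp$ sit in orthogonal subspaces, $\Pi_F(C) = F + N_FC$ is a conical hull, and $[\Pi_F(C)] = [F] \ast [N_FC]$.

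Second, fix $L$. The $k$-faces of $\Fan(\Arr)$ with $\lin F = L$ are precisely the chambers of the restricted arrangement $\Arr_L = \{H \cap L : H \in \Arr,\; L \not\subseteq H\}$ on $L$. For any such $F$, the chambers $C$ of $\Arr$ having $F$ as a face are in bijection with the chambers $D$ of the localised arrangement $\Arr^L|_{L^\perp} = \{H \cap L^\perp : H \in \Arr,\; L \subseteq H\}$ on $L^\perp$, under a correspondence for which $N_FC$ equals the polar of $D$ inside $L^\perp$. Since the inner sum $\sum_C [N_FC]$ is therefore independent of $F$, the double sum factors as
\[
  V_k(\Arr) \ = \ \sum_{L \in \Flats_k(\Arr)} \Big(\sum_{F}[F]\Big) \ast \Big(\sum_{C}[N_FC]\Big).
\]

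Third, Lemma~\ref{lem:key} applied to $\Arr_L$ evaluates the first factor: its flats are $\{K \in \Flats(\Arr) : K \subseteq L\}$ with $\mu_{\Arr_L}(L,K) = \mu(L,K)$, giving $\sum_F[F] = \sum_{K \subseteq L}|\mu(L,K)|\,[K]$, the left factor in the theorem.

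Fourth---the step I expect to be the main obstacle---I would apply Lemma~\ref{lem:key} to $\Arr^L|_{L^\perp}$, then polarity, then intersect with $[L^\perp]$. The flats of $\Arr^L|_{L^\perp}$ are the subspaces $N = L^\perp \cap M$ for $M \in \Flats(\Arr)$ with $L \subseteq M$, under an order isomorphism preserving $|\mu|$, so $\sum_D [D] = \sum_M |\mu(\Bot,M)|\,[L^\perp \cap M]$. Applying the polarity isomorphism $[C] \mapsto [C^\vee]$ (noting $N^\vee = L + M^\perp$ for the subspace $N = L^\perp \cap M$) yields $\sum_D [D^\vee] = \sum_M |\mu(\Bot,M)|\,[L + M^\perp]$. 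Since $[N_FC] = [L^\perp]\cdot [D^\vee]$ in $\coneval_d$ (as $N_FC \subseteq L^\perp$) and $[L^\perp]\cdot[L + M^\perp] = [L^\perp \cap (L + M^\perp)] = [M^\perp]$ using $L \cap L^\perp = \{0\}$, I obtain $\sum_C [N_FC] = \sum_{L \subseteq M} |\mu(\Bot,M)|\,[M^\perp]$, the right factor in the theorem. Summing over $L \in \Flats_k(\Arr)$ completes the proof.
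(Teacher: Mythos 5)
Your argument is correct and follows essentially the same route as the paper: expand $V_k$ as $\sum [F]\ast[N_F C]$, group by the flat $L = \lin F$, and evaluate both factors by applying Lemma~\ref{lem:key} to the restriction and to the localization, using polarity for the normal-cone side. The only deviation is cosmetic: where you apply Lemma~\ref{lem:key} to the localization restricted to $L^\perp$, polarize in $\R^d$ to get $[L + M^\perp]$, and then intersect with $[L^\perp]$, the paper works with the localization $\Arr_L$ directly in the ambient (whose flats $M$ contain $L$, so that $M^\vee = M^\perp$ immediately), which saves one step but yields the same identity.
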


The proof is inspired by the arguments leading
to~\cite[Theorem~5]{KlivansSwartz}. The additional bookkeeping is delegated to
the ring structure on $\coneval_d$.

\begin{proof} We can rewrite the left-hand side:
    \begin{align*}
      V_k(\Arr) 
      \ &= \ \sum_{P \in \Fan(\Arr)} \sum_{F \in\Faces_k(P)} [F] \ast [N_FP]
      \ = \ \sum_{F \in\Faces_k(\Arr)} [F] \ast \sum_{P \in \Fan(\Arr) \atop F \leq P} [N_FP]\,.
    \end{align*}
    Let $L = \lin(F)$ and denote by $\Arr_L \defeq \{H \in \Arr : L \subseteq
    H\}$ the \Defn{localization} of $\Arr$ at $L$. Note that there is a
    one-to-one correspondence between the regions $C$ of $\Arr_L$ and $P \in
    \Fan(\Arr)$ with $F \subseteq P$ and that under this correspondence
    $C^\vee = N_FP$. Thus
    \begin{align*}
      \sum_{F \in\Faces_k(\Arr)} [F] \ast \sum_{P \in \Fan(\Arr) \atop F \leq P} [N_FP]
      \ = \ \sum_{L \in \Flats_k(\Arr)} \Big(\sum_{F \in \Fan(\Arr^L)} [F]\Big) \ast
      \Big(\sum_{C \in \Fan(\Arr_L)} [C^\vee]\Big)\,.
    \end{align*}
    Using Lemma~\ref{lem:key} on the inner sums gives the result:
    \begin{align*}
      \sum_{F \in \Fan(\Arr^L)} [F] \ = \ \sum_{K \in \Arr^L} |\mu_{\Arr^L}(L, K)| \cdot [K] \ = \ \sum_{K \in \Flats(\Arr) \atop K \subseteq L} |\mu_{\Arr}(L, K)| \cdot [K]\,,
    \end{align*}
    and:
    \begin{align}\label{eq:normal_cone}
      \sum_{C \in \Fan(\Arr_L)} [C^\vee] \ &= \ \Big(\sum_{C \in \Fan(\Arr_L)} [C]\Big)^\vee \ = \ \Big(\sum_{M \in \Arr_L} |\mu_{\Arr_L}(\Bot, M)| \cdot [M]\Big)^\vee\\
      \ &= \ \sum_{M \in \Flats(\Arr) \atop L \subseteq M} |\mu_{\Arr}(\Bot, M)| \cdot [M^\perp]\,.\qedhere
    \end{align}
\end{proof}

For direct comparison with Theorem 1.4 of \cite{Kabluchko2020}, note that for
$K \subseteq L \subseteq M \in \Flats(\Arr)$ we have $\dim(K + M^\perp) = d$
if and only if $K = L = M$. Thus for a generic point $x \in \R^d$ the
evaluation $([K] \ast [M^\perp])(x) = 0$ if $K \neq L$ or $L \neq M$. This
proves Corollary~\ref{cor:klivans_swartz_indi} coefficientwise:
\begin{align*}
    V_k(\Arr)(x) 
  \ = \ \sum_{L \in \Flats_k(\Arr)} |\mu(L, L)| \cdot |\mu(\R^d, L)| \cdot ([L] \ast [L^\perp])(x)
    \ = \ w_k(\Arr)\,,
\end{align*}

where we used that the terms $|\mu(\R^d, L)|$ sum up to $w_k$
(cf.~\cite[Section 3.10]{EC1}). Since all summands in
\eqref{eq:Vk_arr} are positive, this also shows that
${V_k(\Arr)(x) \geq w_k}$ for (nongeneric) points $x \in \R^d$. We
have $V_k(\Arr)(x) > w_k$ precisely when $x \in K + M^\perp$ for
$K \subseteq L \subseteq M$ such that $\dim L = k$ and
$\dim(K + M^\perp) = d - 1$. This arrangement has a simple
interpretation. Assume that $\Arr'$ is an arrangement of hyperplanes
in some subspace $U \subset \R^d$, then
$\Arr' + U^\perp = \{ H + U^\perp : H\in \Arr' \}$ is the
corresponding arrangement in $\R^d$ with lineality space $U^\perp$.

\begin{cor}
    Let $\Arr$ be an arrangement of hyperplanes in $\R^d$. Then
    $V_k(\Arr)(x) > w_k$ for some $0 \leq k \leq d$ if
    and only if $x$ is contained in the arrangement
    \[
        \Pi(\Arr) \ \defeq \ \bigcup_{L \in \Flats(\Arr)} (\Arr \Restrict L) +
        L^\perp \, .
    \]
\end{cor}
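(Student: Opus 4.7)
The plan is to combine Theorem~\ref{thm:Vk_arr} with the discussion preceding the statement to describe the exceptional set explicitly. Since every summand in~\eqref{eq:Vk_arr} is nonnegative, $V_k(\Arr)(x) > w_k$ for some $k$ if and only if some off-diagonal summand $[K + M^\perp](x)$ is positive; that is, $x \in K + M^\perp$ for some chain $K \subseteq L \subseteq M$ in $\Flats(\Arr)$ with $K \neq L$ or $L \neq M$. Because $K \subseteq M$ forces $K \cap M^\perp = \{0\}$, we have $\dim(K + M^\perp) = \dim K + d - \dim M$, and any such proper chain can be refined to a pair with $\dim M - \dim K = 1$: if $K \subsetneq L$, replace $M$ by $L$ and $K$ by a flat $K'$ with $K \subseteq K' \subsetneq L$ and $\dim K' = \dim L - 1$; if $L \subsetneq M$, replace $K$ by $L$ and $M$ by a flat $M'$ with $L \subsetneq M' \subseteq M$ and $\dim M' = \dim L + 1$. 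Using $M^\perp \subseteq L^\perp$ when $L \subseteq M$, one checks $K + M^\perp \subseteq K' + M'^\perp$, so the exceptional set equals
\[
\bigcup_{\substack{K \subsetneq M \in \Flats(\Arr) \\ \dim M - \dim K = 1}} (K + M^\perp).
\]

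The remaining task is to identify this union with $\Pi(\Arr)$. By definition, $\Arr \Restrict L$ is a hyperplane arrangement in $L$ whose hyperplanes are the intersections $H \cap L$ for $H \in \Arr$ with $L \not\subseteq H$. These intersections are precisely the flats $K$ of $\Arr$ with $K \subsetneq L$ and $\dim K = \dim L - 1$: writing any such $K$ as an intersection of $\Arr$-hyperplanes $\bigcap_i H_i$, one may discard those $H_i$ containing $L$ (they already contain $K$), and for each remaining $H_i$ the intersection $H_i \cap L$ has dimension $\dim L - 1$ and contains $K$, hence equals $K$ by dimension count. Consequently $(\Arr \Restrict L) + L^\perp$ consists of the hyperplanes $K + L^\perp$ for such $K$, and $\Pi(\Arr)$ is exactly the union of these over all $L \in \Flats(\Arr)$, which matches the display above.

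The main obstacle is the bookkeeping in the refinement step: both the reduction from an arbitrary triple $K \subseteq L \subseteq M$ to a unit-covering pair and the identification of hyperplanes of $\Arr \Restrict L$ with codimension-one subflats of $L$ rely on the graded structure of $\Flats(\Arr)$, together with the fact that subspaces of the form $K + M^\perp$ with $K \subseteq M$ are complementary sums with a clean dimension formula. Granting this, the two descriptions of the exceptional set coincide and the corollary follows.
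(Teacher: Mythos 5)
Your argument is correct and matches the approach in the paper: the paper's proof is the discussion immediately preceding the corollary, which identifies the exceptional set with the union of the $K+M^\perp$ of dimension $d-1$, and you fill in the two steps the paper leaves implicit (the refinement of an arbitrary off-diagonal triple $K\subseteq L\subseteq M$ to one with $\dim M - \dim K = 1$ via the graded structure of $\Flats(\Arr)$, and the identification of codimension-one subflats of $L$ with the hyperplanes of $\Arr\Restrict L$).
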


A point $x \in \R^d$ is contained in $(\Arr \Restrict L) + L^\perp$ if
and only if the orthogonal projection of $x$ onto $L$ is contained in
$\Arr \Restrict L$. The exceptional set of
Theorem~\ref{thm:Vk_arr} was also considered by
Lofano--Paolini~\cite{LofanoPaolini}. For a subspace $U \subset \R^d$ and
$x \in \R^d$, let $d_U(x) \defeq \| x - \pi_U(x)\|$ be the distance of
$x$ to $U$. Lofano--Paolini call a point $x \in \R^d$ \emph{generic}
with respect to an arrangement $\Arr$ if $d_L(x) \neq d_{L'}(x)$ for
all distinct $L,L' \in \Flats(\Arr)$. The set of non-generic
points lies on a collection of quadrics and is in general not a
hyperplane arrangement. A characterization of the points away from
$\Pi(\Arr)$ is as follows; cf.~Lemma 5.2 of~\cite{LofanoPaolini}.

\begin{lem}
    Let $\Arr$ be a hyperplane arrangement in $\R^d$ and $x \in \R^d$. Then $x
    \not \in \bigcup \Pi(\Arr)$ if and only if $d_{L'}(x) > d_L(x)$
    for all flats $L,L' \in \Flats(\Arr)$ with $L' \subsetneq L$.
    Equivalently, this is the case if and only if $d_H(x) > d_L(x)$ for all
    flats $L$ and $H \in \Arr \Restrict L$.
\end{lem}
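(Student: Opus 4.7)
The plan is to reduce everything to a single observation about orthogonal projection and then handle the two biconditionals separately.

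First I would translate the definition of $\Pi(\Arr)$. For a flat $L \in \Flats(\Arr)$ and $H \in \Arr \Restrict L$ we have $H \subseteq L$, hence $H + L^\perp = \pi_L^{-1}(H)$, so $x \in H + L^\perp$ iff $\pi_L(x) \in H$. Since $\pi_L(x)$ is the unique closest point in $L$ to $x$, and if it already lies in the subspace $H \subseteq L$ it is also the closest point in $H$, this is equivalent to $d_H(x) = d_L(x)$. Because $H \subseteq L$ always forces $d_H(x) \geq d_L(x)$, this immediately yields: $x \notin \bigcup \Pi(\Arr)$ if and only if $d_H(x) > d_L(x)$ for every flat $L$ and every $H \in \Arr \Restrict L$. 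This is precisely the second equivalence in the statement.

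For the first equivalence, the condition $d_{L'}(x) > d_L(x)$ for all $L' \subsetneq L$ trivially implies the version restricted to $H \in \Arr \Restrict L$, since such $H$ is a flat strictly contained in $L$. For the converse I would invoke the standard fact that the maximal proper subflats of a flat $L$ are precisely the elements of $\Arr \Restrict L$: if $L' \subsetneq L$ is a flat with $\dim L' = \dim L - 1$ and $L' = L \cap \bigcap_i H_i$ is a presentation with $H_i \in \Arr$ not containing $L$, then each $L \cap H_i$ is a hyperplane of $L$ containing $L'$, and codimension-one forces $L \cap H_i = L'$, so $L' \in \Arr \Restrict L$. Any pair of nested flats $L' \subsetneq L$ is thus joined by a chain $L = L_0 \supsetneq L_1 \supsetneq \cdots \supsetneq L_k = L'$ of such cover relations, and applying the restriction-hyperplane condition step-by-step yields $d_{L'}(x) > d_L(x)$ by transitivity.

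The only mildly technical ingredient is the identification of cover relations in $\Flats(\Arr)$ with restriction hyperplanes; everything else is a direct consequence of the elementary geometry of $\pi_L$ together with transitivity of strict inequality.
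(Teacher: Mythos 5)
Your proof is correct. Note first that the paper does not actually prove this lemma: it states it as a characterization, pointing the reader to Lemma 5.2 of Lofano--Paolini, so there is no in-paper argument to compare against. Your argument is a clean, self-contained substitute. The key identity $H + L^\perp = \pi_L^{-1}(H)$ (valid because $H \subseteq L$, so $H + L^\perp$ is exactly the preimage of $H$ under orthogonal projection onto $L$) correctly reduces membership in $\Pi(\Arr)$ to the tie condition $d_H(x) = d_L(x)$, and combined with the automatic inequality $d_H(x) \ge d_L(x)$ for $H \subseteq L$ this gives the second biconditional directly. The passage to arbitrary nested pairs $L' \subsetneq L$ via saturated chains is also correct: in the geometric lattice $\Flats(\Arr)$ any cover $L' \lessdot L$ (codimension one in $L$) is realized by some $H_i \in \Arr$ with $L \not\subseteq H_i$ and $L' = L \cap H_i \in \Arr \Restrict L$, so iterating the strict inequality along a maximal chain from $L$ down to $L'$ and using transitivity yields $d_{L'}(x) > d_L(x)$. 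No gaps.
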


\newcommand{\val}{\mathfrak{X}}%
\begin{rem}
    The proof of Theorem~\ref{thm:Vk_arr} and Lemma~\ref{lem:key}
    applies ad verbatim to affine hyperplane arrangements. For those,
    one has to work in the ring $(\val_d, +, \ast)$ of indicator
    functions of polyhedra instead of polyhedral cones, where $\ast$
    is defined via the Minkowski-sum, $[Q] \ast [Q'] = [Q + Q']$. The
    only problem is our use of polarity in
    \eqref{eq:normal_cone}. Here we use that \eqref{eq:normal_cone}
    holds in $(\coneval_d, +, \ast)$, which canonically embeds into
    $(\val_d, +, \ast)$, since $C \vee C' = C + C'$ for
    $C, C' \in \Cones_d$.
\end{rem}

\newcommand{\euler}{\eps}%
Finally, we also want to give a proof that $\ochi_\Fan(-1) = 0$, which
we also do on the level of indicator functions. For this, we give a
simple (algebraic) proof of a result of Schneider~\cite{Schneider2018}
and it's generalization to polyhedra by
Hug--Kabluchko~\cite{hug2016inclusion}. Recall that $\Faces(Q)$ denotes
the set of nonempty faces of a polyhedron $Q$. The
\Defn{Euler-Characteristic} of $Q$, is defined as
\[
    \euler(Q) \defeq \sum_{F \in \Faces(Q)} (-1)^{\dim F}\,.
\]
Recall that for a polyhedron $Q$ with $L = \lineal(Q)$ we have $\euler(Q) =
0$ if $L = 0$ and $Q$ is unbounded and $\euler(Q) = (-1)^{\dim
L}\euler(\pi_{L^\perp}(Q))$ otherwise.

\begin{thm}\label{thm:F-N_FC}
    Let $Q \subseteq \R^d$ be a polyhedron. Then, as elements in
    $\val_d$:
    \[
        \euler(Q) \cdot [\R^d] \ = \ \sum_{F \in \Faces(Q)} =
        (-1)^{\dim F} [F - N_FQ]\,.
    \]
\end{thm}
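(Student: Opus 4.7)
For each $x \in \R^d$, $[F - N_FQ](x) = 1$ means there exist $f \in F$ and $n \in N_FQ$ with $x = f - n$. Since any $n \in N_FQ$ has $\inner{n, \cdot}$ constant on $F$, one has $N_FQ \subseteq (\aff F - \aff F)^\perp$; this forces $n = f - x$ to be perpendicular to $\aff F$, so $f$ must equal $\tilde x_F := \pi_{\aff F}(x)$, the orthogonal projection of $x$ onto $\aff F$. Hence $[F - N_FQ](x) = 1$ if and only if $\tilde x_F \in F$ and $\tilde x_F - x \in N_FQ$, and the identity reduces to the pointwise claim that the alternating sum of $(-1)^{\dim F}$ over these ``active'' faces equals $\euler(Q)$ for every $x$.

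The plan is then to establish both sides as valuations on polyhedra in $\val_d$ and verify agreement on a generating family. The right-hand side $\euler(Q) \cdot [\R^d]$ is a valuation since $\euler$ is. For the base case $Q = L$ an affine subspace of dimension $k$, the only face is $L$, and $N_LL = (\aff L - \aff L)^\perp$ is the orthogonal complement of the direction subspace of $L$; hence $L - N_LL = L + (\aff L - \aff L)^\perp = \R^d$, giving $\sum_F (-1)^{\dim F}[F - N_FL] = (-1)^k[\R^d] = \euler(L)[\R^d]$ directly. Further base cases (halfspaces, simplices, simplicial cones) can be verified by direct computation; for a simplicial cone one checks that $-N_FC$ contains the extremal rays of $C$ complementary to $F$, so $F - N_FC = C$ for every face and the sum collapses to $\euler(C) [C] = 0 = \euler(C) [\R^d]$.

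The main obstacle is to show that $\Phi(Q) := \sum_F (-1)^{\dim F}[F - N_FQ]$ is itself a valuation on polyhedra. For any polyhedron $Q$ and hyperplane $H$, writing $Q^+ := Q \cap H^\ge$, $Q^- := Q \cap H^\le$, and $Q^0 := Q \cap H$, one must show $\Phi(Q) + \Phi(Q^0) = \Phi(Q^+) + \Phi(Q^-)$. Faces of $Q^\pm$ split into surviving faces of $Q$ contained in $H^\pm$, truncations $F \cap H^\pm$ of bisected faces, and new faces of the form $F \cap H$; their normal cones and the resulting Minkowski sums $F - N_FQ$ update in a structured way, and the telescoping of alternating sums requires careful bookkeeping near $H$. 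Once the valuation property is established, combining it with the base cases yields the identity on all polyhedra via standard polyhedral decomposition.
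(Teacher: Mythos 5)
Your approach --- reduce to a pointwise statement, then argue that $\Phi(Q) \defeq \sum_F (-1)^{\dim F}[F-N_FQ]$ is a valuation and check it on a generating family --- is genuinely different from the paper's, which proves the identity by a direct algebraic computation: it takes the Sommerville relation for the cone $D = N_FQ$, applies the Euler map, uses the isomorphism $G' = N_GQ \leftrightarrow G$ between faces of $N_FQ$ and faces of $Q$ containing $F$ to rewrite it as a statement about $[\relint N_FG]$, and then swaps the order of summation so that the inner sum $\sum_{F \subseteq G}[F]\ast[\relint N_FG]$ collapses to $[\R^d]$ by the Moreau decomposition of $G$. Nothing in the paper's proof needs to be checked on a basis; it is a self-contained computation in the convolution ring.

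Unfortunately, as written your argument has two concrete problems. First, the crucial step --- that $\Phi$ is a valuation --- is not proved; you identify it as ``the main obstacle'' and then describe what a proof would have to do (``faces of $Q^\pm$ split into \dots and the telescoping of alternating sums requires careful bookkeeping'') without carrying it out. This is precisely the hard part, because normal cones do not transform simply under slicing by $H$: when a face $F$ is bisected, the normal cone of $F \cap H^\le$ in $Q^-$ is $N_FQ + \R_{\ge 0} n_H$, and tracking how these enlarged normal cones interact with the new faces $F \cap H$ in the alternating sum is exactly the content one would need to supply. (Note also that your first paragraph's pointwise characterization of $[F - N_FQ](x)$ via $\pi_{\aff F}(x)$ is correct but then never used in the remainder of the proposal.)

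Second, the simplicial cone base case is wrong as stated. You claim $F - N_FC = C$ for every face $F$ of a simplicial cone $C$, but this holds only when the generating rays are pairwise orthogonal. For a general simplicial cone with generators $e_1, \dots, e_d$ and dual basis $e_1^*, \dots, e_d^*$, one has $-N_FC = \cone(e_j^* : e_j \notin F)$, so $F - N_FC$ equals $\cone(e_i : e_i \in F) + \cone(e_j^* : e_j \notin F)$, which is generally a strict superset of $C$. For instance, with $C = \cone((1,0),(1,1)) \subset \R^2$ one computes $\{0\} - N_{\{0\}}C = \{c : c_1 \ge 0,\ c_1 + c_2 \ge 0\} \supsetneq C$. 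The identity $\sum_F (-1)^{\dim F}[F-N_FC] = 0$ does hold for this example, but it does so via genuine cancellation among overlapping regions, not because all terms are $[C]$. So this base case, and similarly the simplex case, still needs an actual argument; neither is the triviality you suggest.
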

\begin{proof}
    Note that $(-1)^{\dim Q - \dim F} = \mu(F, Q)$, where
    $\mu = \mu_{\Faces(Q)}$. Denote by $T_FC \defeq (N_FC)^\vee$ the
    tangent cone of $C$ at $Q$.  By the Sommerville-relation
    (cf.~\cite[Lem.~4.1]{AS15}) we have for any cone $D \in \Cones_d$:
    \[
        \sum_{G \in \Faces(D)} (-1)^{\dim G} \cdot [T_GD] \ = \ (-1)^{\dim D}
        [-\relint D]\,.
    \]
    Applying $\Euler$ on both sides and using $\dim T_GD = \dim D$ we
    get
    \[
        \sum_{G \in \Faces(D)} (-1)^{\dim G} \cdot [\relint T_GD] \ = \
        (-1)^{\dim D} [-D]\,.
    \]
    If we set $D \defeq N_FQ$, we have $T_{G'}(N_FQ) = N_FG$ for
    $G' \defeq N_GQ$, so this
    reads:
    \begin{align*}
        (-1)^{\dim Q - \dim F} [-N_FQ]
        \ &= \ \sum_{G' \in \Faces(N_FQ)} (-1)^{\dim G'} \cdot [\relint T_{G'}(N_FQ)]\\
        \ &= \ \sum_{F \subseteq G \in \Faces(Q)} (-1)^{\dim Q - \dim G} \cdot [\relint N_FG]\,.
    \end{align*}
    
    Now, we can simply compute:
    \begin{align*}
      \sum_{F \in \Faces(Q)} (-1)^{\dim F} \cdot [F] \ast [-N_FQ]
      \ &= \ \sum_{F \in \Faces(Q)} [F] \ast \sum_{F \subseteq G \in \Faces(Q)} (-1)^{\dim G} \cdot [\relint N_FG]\\
      \ &= \ \sum_{G \in \Faces(Q)} (-1)^{\dim G} \sum_{F \in \Faces(G)} [F] \ast [\relint N_FG]\\
      \ &= \ \sum_{G \in \Faces(Q)} (-1)^{\dim G} \cdot [\R^d] \ = \ \euler(Q) \cdot [\R^d]\,. \qedhere
    \end{align*}
\end{proof}

That $\ochi_\Fan(-1) = 0$ follows now immediately by noting that
\[
    \sigma_{d-1}(F + N_FC) \ = \ \sigma_{d-1}(F - N_FC)
\]
for all cones $C \in \Fan$ and faces $F$ of $C$.

\bibliographystyle{siam}%
\bibliography{bibliography.bib}

\end{document}